\newcommand{\Pd}{(P) }
\newcommand{\inta}{\int_{\R^N}a(x)| u|^q dx}
\newcommand{\intb}{\int_{\R^N}b(x)| u|^p dx}
\newcommand{\HA}{\mbox{$H^1_A (\R^N)$}}
\newcommand{\C}{\mathbb{C}}
\newcommand{\N}{\mathbb{N}}
\newcommand{\R}{{\mathbb R}}
\newcommand{\Z}{\mbox{${Z\!\!\!Z}$}}
\newcommand{\M}{\mathbb{M}}
\newtheorem{theorem}{Theorem}[section]
\newtheorem{corollary}[theorem]{Corolary}
\newtheorem{remark}[theorem]{Remark}
\newtheorem{prop}[theorem]{Proposition}
\newtheorem{lemma}[theorem]{Lemma} 
\newtheorem{definition}[theorem]{Definition}
\title{Existence and multiplicity results for a class of non-linear Schrödinger equations with magnetic potential involving sign-changing non linearity}
\author{
	de Paiva, Francisco Odair Vieira \thanks{F.O.V.P. received research grants from FAPESP 17/16108-6. } \\
	Departamento de Matem\'atica, UFSCar\\
	S\~{a}o Carlos, SP,  13560-970 Brazil\\
	\texttt{franciscoodair@gmail.com} \\
	\And
	de Souza Lima, Sandra Machado \thanks{S.M.S.L. was supported by CAPES/Brazil and the paper was completed while the second author was visiting the Departament of Mathematics of UFJF, whose hospitality she gratefully acknowledges.  }\\
	Departamento de Ciências Exatas, Biológicas e da Terra\\ INFES-UFF\\
	Santo Antônio de Pádua - RJ, 28470-000, Brazil\\
	\texttt{Corresponding author:{sandra.msouzalima@gmail.com}} \\
	\And
	Miyagaki, Olimpio Hiroshi  \thanks{ O. H. M. received research grants from CNPq/Brazil 307061/2018-1, FAPEMIG CEX APQ 00063/15 and INCTMAT/CNPQ/Brazil. }\\
	Departamento de Matemática, UFJF\\
	Juiz de Fora, MG, 36036-900, Brazil\\
	\texttt {ohmiyagaki@gmail.com} \\
}
\begin{document}
	\maketitle
	
	\begin{abstract}
   In this work we consider the following class of elliptic problems
$$
\left\{ \begin{array} [c]{ll}
- \Delta_A u + u = a(x) |u|^{q-2}u+b(x) |u|^{p-2}u , \mbox{  in  } \R^N &\\
u\in\HA,&\\
\end{array}          \right.\leqno {(P) }
$$
with $2<q<p<2^*= \frac{2N}{N-2}$, $a(x)$ and $b(x)$ are functions that can change signal and satisfy some additional conditions; $u \in H^1_A(\R^N)$ and $A:\R^N \rightarrow\R^N$ is a magnetic potential. Also using the Nehari method in combination with other complementary arguments, we discuss the existence of infinite solutions to the problem in question, varying the assumptions about the weight functions.
	\end{abstract}

	\keywords{sign-changing weight functions \and Magnetic Potential \and Nehari Manifold \and Fibering map}

\noindent 2010 Mathematics Subject Classifications: 35Q60, 35Q55,35B38, 35B33.
  
\section{Introduction}

We are interested in studying the following class of elliptical problems 
$$
\left\{ \begin{array} [c]{ll}
- \Delta_A u + u = a(x) |u|^{q-2}u+b(x) |u|^{p-2}u , \mbox{  in } \R^N &\\
u\in\HA,&\\
\end{array}          \right.\leqno {(P) }
$$
with $2<q<p<2^*= \frac{2N}{N-2}$, $a(x)$ and $b(x)$ are functions that can change signal and satisfy some additional conditions. $u \in H^1_A(\R^N)$ and $A:\R^N \rightarrow\R^N$ is a magnetic potential. We will discuss the existence of infinite solutions to the problem in question, varying the hypotheses about the weight functions.

We will make use of the magnetic operator in which we work with the Magnetic Laplacian given by $(i\nabla -A)^2 + V$, where $A: \R^N \rightarrow \R^N$ is the magnetic potential and $ V: \R^N \rightarrow \R $ is the electrical potential. Its importance in physics was discussed in Alves and Figueiredo \cite{ClauFig} and also in Arioli and Szulkin\cite{ArSz}.

Still seeking to contextualize the problems that we deal with in this work, we will speak a little of what has been done with respect to problems of the convex type with the usual Laplacian. Starting with the work of Alama and Tarantello \cite{1} who consider the problem
$$
\left\{ \begin{array} [c]{ll}
- \Delta u - \lambda u = W (x)f (u), x \in \Omega &\\
u(x) = 0,\;\; x \in \partial  \Omega.&\\
\end{array}          \right.\leqno { }
$$
In this case $W$ is a function that can change sign and $ f $ satisfies $\lim_{ |u|\rightarrow \infty} f (u)/|u|^{p- 2} u = a > 0$ for some $2 < p \leq  2^*  (2 < p < \infty $ if $ N = 1 $ or $ 2)$. They deal with the existence of positive solutions. In the case where $ f $ is an odd function, they show the existence of infinite solutions, which may even be solutions that change sign.

In \cite{4}, Berestycki et al. study the existence and non-existence of solutions to the following class of problems
$$
\left\{ \begin{array} [c]{ll}
- \Delta u + m(x)u = a(x)u^p , x \in \Omega, \\
Bu(x) = 0, x \in \partial \Omega. &\\
\end{array}          \right.\leqno { }
$$
In this case, $m(x)$ can change signal, $1 < p < 2^*  -  1$ and $Bu = u$, $Bu = \partial_{ \nu}  u$ or $Bu = \partial_{ \nu}  u + \alpha (x)u, \alpha  > 0$.

Other works that also dealt with the convex case in the bounded domain with the usual Laplacian can be seen in \cite{2,BW,BZ2003,12,19}. 

Already in $ \R^N, $ Miyagaki \cite{olimpio1} studies the existence of nontrivial solutions for the following class of elliptical problems
$$
- \Delta u + a(x)u = \lambda|u|^{q-1}u+  |u|^{p-1}u , \; x\;\; \in \;\; \R^N,$$
where  $1<p<q\leq 2^*-1= \frac{N+2}{N-2}$, $\lambda>0$ is a constant and $a(x):\R^N \rightarrow \R$ is a continuous function satisfying some additional conditions.

Jalilian and Szulkin \cite{jasz} an equation of the type
$$
\left\{ \begin{array} [c]{ll}
- \Delta u + u = a(x) |u|^{p-2}u+b(x) |u|^{q-2}u ,  \; x\;\; \in \;\; \R^N, &\\
u\in H^1(\R^N),&\\
\end{array}          \right.\leqno { }
$$
with $2<p<q<2^*= \frac{2N}{N-2}$, on what $a(x)$ or $b(x)$ are functions that can change signal. They investigate the existence of infinite solutions. This was the problem we tried to extend.

We will present, at this point, the main works that use the magnetic laplacian, in order to contextualize and highlight the importance of the study that we develop.

The first results in non-linear Schr\"{o}dinger equations, with  $A \neq 0 $ can be attributed to Esteban and Lions \cite{EstLions} in which the existence of stationary solutions for equations of the type
$$-\Delta_A+Vu =|u|^{p-2}u, u\neq 0, u\in L^2(\R^N)  , $$
$p \in (2,\infty),$  using minimization methods for the case $ V = 1 $, with constant magnetic field and also for the general case.

In \cite{Kurata}, Kurata showed that the equation
\begin{equation}\label{eq.kurata}
\left(\frac{h}{i}\nabla -A(x)\right)^2u+V(x)u - f(|u|^2)u=0, x\in \R^N
\end{equation}
with certain assumptions about the magnetic field A, as well as for the potential V and f, has at least
a solution that focuses near the set of global minimums of V, as $h\rightarrow 0$.

Also, Chabrowski and Szulkin \cite{ChabSzul} worked with this operator in the critical case and with the electric potential V being able to change signal. Cingolani, Jeanjean and Secchi \cite{Cingolani2} consider the existence of mult-peak solutions in the subcritical case, obtaining multiplicity results using topological arguments and proving that the magnetic potential A only contributes to the solution phase factor of the equation (\ref{eq.kurata}) for very small values of $h>0$. 

Alves and Figueiredo \cite{ClauFig} work with a problem of the type
$$-\Delta_A u = \mu |u|^{q-2}u+|u|^{2^*-2}u, u \neq  0, x \in \Omega \subset \R^N ,  $$
 $\Omega$ is a bounded domain, $\mu>0$ and $2\leq q <2^*$, which relates the number of solutions with the topology of $\Omega$.

Also, Alves, Figueiredo and Furtado \cite{alves.furtado} studied the following equation
$$\left( -i\nabla -A \left( \frac{x}{\lambda}\right) \right) ^2 u +u=  f(|u|^{2})u , x \in \Omega_{\lambda},$$
in which the set $\Omega \subset \R^N $ is a bounded domain, $\lambda> 0$ is a real parameter, $A$ is a regular magnetic field and $f$ is a superlinear function with subcritical growth. For values of $\lambda$ sufficiently large, the authors show the existence and multiplicity of solutions relating the number of solutions with the topology of $\Omega$, 

In our case $A:\R^N \rightarrow\R^N$ is a magnetic potential in $L^2_{loc}(\R^N,\R^N)$. We will use the method introduced by Nehari in 1960, which has become very useful in critical-point theory. The method of fibering map introduced by Drabek and Pohozaev \cite{DP} and discussed by Brown and Zhang \cite{BZ2003}, relates the functional to a real function. The information about this function gives us an important basis to achieve the result we are looking for. We did not find in the literature works with the Laplacian magnetic that deal with the convex case with weight functions and can change the signal. We are interested in studying the following class of elliptical problems
$$
\left\{ \begin{array} [c]{ll}
- \Delta_A u + u = a(x) |u|^{q-2}u+b(x) |u|^{p-2}u ,  &\\
u\in\HA,&\\
\end{array}          \right.\leqno {(P) }
$$
where $x \in \R^N$, $2<q<p<2^*= \frac{2N}{N-2}$, $a$ and $b$ are functions that can change signal and satisfy some additional conditions. Besides that, $u \in H^1_A(\R^N)$ and $A:\R^N \rightarrow\R^N$ is a magnetic potential $L^2_{loc}(\R^N,\C)$. We will discuss the existence of solutions and, varying the hypotheses under the weight functions, we will study the existence of infinite solutions to the problem in question.

In this case, we need to enumerate some hypotheses among which the functions $ a, b $ can assume in the theorems that follow. Before that, we will make the following definition.

\begin{definition}
	Let $g \in E(\R^N)$ and $j  \in \Z$. We say that the function $ g $ is $ 1- $ periodic in $ x_i $ if
	$$ g(x_1,x_2, ... , x_i, ..., x_n)= g(x_1,x_2, ... , x_i+j, ..., x_n),$$
	for $i=1,2,...,n$.
\end{definition}

In what follows, we assume that $ a, b $ satisfies some of the following hypotheses:

\begin{enumerate}
	
	\item[($D_1$)] $a\in L^r$ and $b\in L^s$, where $1<\frac{r}{r-1}<\frac{2^*}{p}$ and $ 1<\frac{s}{s-1}<\frac{2^*}{q}$;
	
	\item[($D_2$)] $a, b \in L^{\infty}(\R^N)$, $\limsup_{|x|\rightarrow \infty}a(x) \leq 0$ and $\limsup_{|x|\rightarrow \infty}b(x) \leq 0$;
	
	\item[($D_3$)] $a, b \in L^{\infty}(\R^N)$, $a$ and $b$ are 1-periodic functions in $x_1$, $x_2$,...,$x_n$;
	
	\item[($D_4$)] $b\geq 0$ and the set $\{x\in \R^N ; b>0\}$ has not empty interior;
	
	\item[($D_5$)] $a\leq 0$ and the set $\{x\in \R^N ; b>0\}$ has not empty interior.
\end{enumerate}

The conditions ($ D_4 $) and ($ D_5 $) appear first in \cite[Example 4.3]{12} in which the existence of positive solutions to a problem with the usual Laplacian and bounded domain is studied. Also Jalilian and Szulkin in \cite{jasz}, use the above hypotheses to treat an elliptical problem in $\R^N$.



\begin{theorem}\label{teo2.1}
Assume that ($ D_1 $) or ($ D_2 $) and ($ D_4 $) or ($ D_5 $) is satisfied. So the problem $ \Pd $ has infinite solutions.
\end{theorem}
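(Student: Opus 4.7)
The strategy mirrors the one used by Jalilian and Szulkin in the non-magnetic setting \cite{jasz}, transplanted to $\HA$ via the diamagnetic inequality $|\nabla|u||\leq|\nabla_A u|$, which lets us borrow the Sobolev embeddings and local compactness of the real Sobolev space. Define the even $C^1$ functional
$$J(u)=\frac{1}{2}\int_{\R^N}\bigl(|\nabla_A u|^2+|u|^2\bigr)\,dx-\frac{1}{q}\int_{\R^N}a(x)|u|^q\,dx-\frac{1}{p}\int_{\R^N}b(x)|u|^p\,dx$$
on $\HA$, whose critical points are the weak solutions of $\Pd$. The plan is to produce infinitely many critical pairs $\pm u_k$ via a Krasnoselskii genus minimax on a symmetric piece of the Nehari manifold $\mathcal{N}=\{u\in\HAo:\langle J'(u),u\rangle=0\}$.

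First I would carry out the fibering analysis $\varphi_u(t)=J(tu)$. Under $(D_4)$ ($b\geq 0$, $\{b>0\}$ with nonempty interior) or $(D_5)$ ($a\leq 0$, same condition on $b$), the sign structure of the weights together with $2<q<p$ forces $\varphi_u$, for every $u$ in the symmetric open cone $\mathcal{O}=\{u\in\HAo:\int_{\R^N}b(x)|u|^p\,dx>0\}$, to have a unique positive critical point $t(u)$ that is a strict maximum. The map $u\mapsto t(u)u$ realizes $\mathcal{O}\cap\{\|u\|_A=1\}$ as a symmetric $C^1$ submanifold $\mathcal{N}^+\subset\mathcal{N}$ on which $J$ is bounded below by a positive constant, is coercive, and which is a natural constraint, so that Palais-Smale sequences of $J|_{\mathcal{N}^+}$ are PS sequences of $J$.

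Next I would settle the compactness at the relevant minimax levels. Under $(D_1)$, the integrability exponents with $\tfrac{r}{r-1}<\tfrac{2^*}{p}$ and $\tfrac{s}{s-1}<\tfrac{2^*}{q}$ combined with Hölder and the Sobolev embedding $\HA\hookrightarrow L^{2^*}$ make the weight functionals sequentially weakly continuous on $\HA$, so PS sequences converge strongly and (PS) holds at every level. Under $(D_2)$, the loss of compactness comes only from mass escaping to infinity; since $\limsup_{|x|\to\infty}a(x)\leq 0$ and $\limsup_{|x|\to\infty}b(x)\leq 0$, any such escaping piece contributes non-positively to $\int a|u|^q+\int b|u|^p$, so a Brezis--Lieb / vanishing-dichotomy splitting together with the diamagnetic inequality and a gauge-adapted translation argument rules out non-trivial bubbles and restores strong convergence of the weight integrals along PS sequences at arbitrary levels.

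Finally, $\mathcal{O}$ contains compactly supported functions whose supports can be packed into disjoint subsets of the open set $\{b>0\}$, so arbitrarily high-dimensional symmetric spheres embed into $\mathcal{N}^+$ and the Krasnoselskii genus satisfies $\gamma(\mathcal{N}^+)=\infty$. Setting
$$c_k=\inf_{K\in\Gamma_k}\sup_{u\in K}J(u),\qquad \Gamma_k=\{K\subset\mathcal{N}^+\text{ symmetric, compact, with }\gamma(K)\geq k\},$$
each $c_k$ is finite and non-decreasing, and a standard equivariant deformation lemma on $\mathcal{N}^+$ together with the (PS) condition established above shows that every $c_k$ is a critical value of $J$, with coincidences $c_k=\cdots=c_{k+\ell}$ producing a critical set of genus $\geq\ell+1$; either way we obtain infinitely many pairs $\pm u_k$ of solutions of $\Pd$. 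The main obstacle is the compactness step under $(D_2)$: in contrast with the non-magnetic case, translations of the kinetic term in $\HA$ must be dressed with gauge factors, and one needs to quantify carefully how the decay of $a,b$ at infinity together with the diamagnetic inequality prevents these dressed translations from generating non-trivial limit bubbles along PS sequences, adapting the argument of \cite{jasz}.
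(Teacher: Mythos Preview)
Your overall architecture---fibering maps, Nehari manifold as a closed $C^2$ natural constraint, Palais--Smale on $\M$, and a Krasnoselskii genus argument fed by test functions supported in $\{b>0\}$---is exactly the route the paper takes, and your treatment of the $(D_1)$ case (weak continuity of the weighted nonlinear terms via H\"older and Sobolev) matches Lemma~\ref{lema3.1} and Proposition~\ref{prop3.3}.

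The one place where you diverge is the $(D_2)$ compactness step, and here the paper's argument is both simpler and avoids the obstacle you flag. You propose a concentration--compactness/bubble analysis with gauge-dressed translations; but under $(D_2)$ there is no translation invariance at all, so this machinery is not natural and the ``gauge-adapted translation'' worry is misplaced (it belongs to the periodic setting of Theorem~\ref{teo2.2}). Instead, the paper writes $a=a^+-a^-$, $b=b^+-b^-$, observes that $(D_2)$ forces $a^+(x),b^+(x)\to 0$ as $|x|\to\infty$, and proves directly (Lemma~\ref{lema3.2}) that $A'_+,B'_+:\HA\to\HA^*$ are completely continuous using only local compactness. The negative parts are handled by the elementary convexity inequality $(|v|^{t-2}v-|u|^{t-2}u)(v-u)\geq 0$ for $t\geq 2$, which gives the correct sign in
\[
\|u_n-u\|_A^2-\langle A'_+(u_n)-A'_+(u),u_n-u\rangle-\langle B'_+(u_n)-B'_+(u),u_n-u\rangle\leq o(1),
\]
so $u_n\to u$ strongly without any splitting or bubble exclusion. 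This sidesteps entirely the issue you identify as ``the main obstacle''.
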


We will now announce our second result.

\begin{theorem}\label{teo2.2}
	Assume $ (D_3) $ and also $ (D_4) $ or $ (D_5) $ are satisfied. So the problem $ \Pd $ has infinite geometrically distinct solutions.
\end{theorem}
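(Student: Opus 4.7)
My strategy would be to adapt the Jalilian--Szulkin scheme \cite{jasz} to the magnetic setting. Hypothesis $(D_3)$ gives $\Z^N$-periodicity of the weights $a,b$; assuming also (as is implicit for the statement to have content) that the magnetic potential $A$ is $\Z^N$-periodic, the functional $\jf$ and the Nehari manifold $\mathcal{N}^-$ associated with $\Pd$ are invariant under the group action
\[
(k\cdot u)(x)=u(x-k)\,e^{i\omega_k(x)},\qquad k\in\Z^N,
\]
where $\omega_k$ is the gauge phase converting ordinary translation into magnetic translation. Two solutions of $\Pd$ are then called geometrically distinct when they lie in different orbits. The fibering-map analysis carried out for Theorem~\ref{teo2.1} transfers verbatim to the present setting, so $\mathcal{N}^-$ is a $C^1$-submanifold on which $\jf$ is coercive, $\Z^N$-invariant, and bounded below.

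The heart of the proof is a concentration-compactness/splitting machinery tailored to $\HA$. First I would prove a magnetic analogue of Lions' nonvanishing lemma: any bounded sequence $(u_n)\subset\HA$ whose $L^{p}$-norm stays bounded away from zero admits, after passing to a subsequence and applying the $\Z^N$-action, a weak limit $u\neq 0$; the diamagnetic inequality $|\nabla|u||\le|\nabla_A u|$ reduces this to the classical $H^1$ statement. Combining this with a Brezis--Lieb-type decomposition for the magnetic quadratic form, I would establish a splitting lemma: every Palais--Smale sequence for $\jf|_{\mathcal{N}^-}$ decomposes, up to subsequence, as a finite sum of $\Z^N$-translates of nontrivial critical points of $\jf$ plus a remainder converging strongly to zero. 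Consequently, the set of Palais--Smale levels is contained in the discrete additive semigroup generated by the critical values of $\jf$.

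The multiplicity argument is then by contradiction. If $\Pd$ admitted only finitely many geometrically distinct solutions, the critical values of $\jf$ would be finite, and so would the admissible PS levels below any fixed energy. A Lusternik--Schnirelmann/Krasnoselskii genus argument on the quotient $\mathcal{N}^-/\Z^N$, with test classes built from $k$ widely separated translates of already known solutions of $\Pd$ glued together on $\mathcal{N}^-$, would yield a sequence of min-max critical values $c_k\to\infty$. An equivariant deformation lemma prevents the pseudo-gradient flow from collapsing distinct $\Z^N$-orbits; for $k$ large, $c_k$ must fall outside the locally finite admissible set, a contradiction.

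\textbf{Main obstacle.} The most delicate step is the splitting lemma: one must control the magnetic cross-terms $\int(\nabla_A u_n)\,\overline{\nabla_A \psi}\,dx$ along translated-and-gauged sequences and verify that the weak limits solve $\Pd$ itself (rather than a problem with $A$ shifted by a residual term). Once the magnetic translation operators are introduced correctly and the gauge invariance of $\jf$ under $(D_3)$ is secured, this step reduces, via the diamagnetic inequality and a magnetic Brezis--Lieb lemma, to the non-magnetic situation handled in \cite{jasz}; the Nehari setup, the nonvanishing lemma, and the genus multiplicity argument are then routine adaptations.
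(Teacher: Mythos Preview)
Your overall strategy---assume finitely many geometrically distinct critical points and derive a contradiction via a $\Z^N$-equivariant genus argument on the Nehari manifold---matches the paper's, and you correctly flag that translation invariance of $I$ requires periodicity of $A$ (which the paper uses tacitly). But the technical engine you propose is genuinely different from the one the paper runs.

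The paper follows Szulkin--Weth \cite{16} rather than a profile-decomposition route. It never proves a splitting lemma; instead it establishes the much lighter \emph{discrete Palais--Smale dichotomy} (Lemma~\ref{lema4.4}): for any two $(PS)$-sequences in $\M$, either their difference tends to $0$ or its $\limsup$ is at least $\kappa>0$, where $\kappa$ is the minimal mutual distance in $K\cup\{0\}$ (Lemma~\ref{lema4.3}, valid under the finiteness hypothesis). This dichotomy feeds directly into a pseudo-gradient flow analysis (Lemmas~\ref{lema4.6}--\ref{lema4.7}) yielding the deformation needed for the genus argument. With $c_k:=\inf\{d:\gamma(I^d)\ge k\}$ one gets $\gamma(K_{c_k})\ge 1$, while Lemma~\ref{lema4.3} forces $\gamma(K_d)\le 1$; hence $c_k<c_{k+1}$ and the $c_k$ are infinitely many distinct critical values, contradicting finiteness of $\mathcal{K}$. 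No multibump gluing and no additive-semigroup bookkeeping of PS-levels is needed.

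Your route---full splitting lemma plus test classes built from widely separated translates---is a legitimate alternative in the spirit of Coti~Zelati--Rabinowitz, but it is heavier, and your closing step is underspecified. Knowing that PS-levels lie in the semigroup generated by finitely many critical values does not by itself produce a deformation at the min-max levels $c_k$, nor does $c_k\to\infty$ alone contradict that semigroup (which is infinite). To close the argument you would still need precisely the kind of flow-based deformation (your ``equivariant deformation lemma'') that the paper extracts from Lemma~\ref{lema4.4}; once you have that, the splitting lemma and the multibump construction become superfluous. In short: your plan is not wrong, but the paper's Szulkin--Weth mechanism replaces your hardest ingredient with a two-line dichotomy and avoids the gluing altogether.
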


We will use the relationship between the Nehari manifold and the fibering map to discuss the existence of nontrivial solutions for this class of elliptical problems. We will show that the Nehari manifold is closed and is a $ C ^ 2 $ manifold under each of the hypotheses of the above theorems. To obtain the result of Theorem \ref{teo2.1}, we show that the PS condition is satisfied in the manifold and we will use a Krasnoselskii genus argument, which can also be seen in \cite{jasz}. Under the hypotheses Teorema \ref{teo2.2}, the PS condition is not valid, hence we need to use an  deformation type argument which is based on an idea of Szulkin and Weth \cite{16} to show the existence of infinitely different geometric solutions. \\

\section{ Initial Considerations for the Problem $\Pd$}

In this section we will define the Nehari manifold associated with the $ \Pd $ problem and its relation to the fibering map. The functional associated with the problem in question is given by
$$ I(u) :=\frac{1}{2}||u||_A^2 - \frac{1}{q}\int_{R^N}a(x)|u|^q dx - \frac{1}{p}\int_{R^N}b(x)|u|^p dx$$
and since ($D_1$), ($D_2$) or ($D_3$) are satisfied we will have that the functional is of class $C^2 (\HA,\C$).
We can also see that the critical points of the functional are weak solutions of the problem $\Pd$.
\begin{prop} If ($D_4$) or ($D_5$) are satisfied, then the functional $ I $ is not bounded below in $\HA$.
\end{prop}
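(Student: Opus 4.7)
The plan is to exhibit a single direction $u_0 \in \HA$ along which $I(tu_0) \to -\infty$ as $t \to \infty$. The common thread in $(D_4)$ and $(D_5)$ is that $\Omega := \mathrm{int}\{x\in\R^N : b(x)>0\}$ is non-empty. First I would pick $u_0 \in C_c^\infty(\R^N)$ with $u_0 \not\equiv 0$ and $\mathrm{supp}(u_0)\subset \Omega$, and extend by zero to all of $\R^N$. Since $A\in L^2_{loc}(\R^N,\R^N)$, the magnetic gradient $(i\nabla - A)u_0$ is in $L^2$, so $u_0\in\HA$; the two weighted integrals are well-defined because $u_0$ has compact support and $a,b$ are locally integrable. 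By construction, $b>0$ throughout $\mathrm{supp}(u_0)$, hence
\begin{equation*}
\gamma := \int_{\R^N} b(x)|u_0|^p\,dx > 0.
\end{equation*}

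Next I would evaluate the functional along the ray $t\mapsto tu_0$, $t>0$:
\begin{equation*}
I(tu_0) = \frac{t^2}{2}\|u_0\|_A^2 \;-\; \frac{t^q}{q}\int_{\R^N} a|u_0|^q\,dx \;-\; \frac{t^p}{p}\,\gamma.
\end{equation*}
The decisive observation is that $2<q<p$, so as $t\to\infty$ the negative cubic-dominant term $-t^p\gamma/p$ beats the other two contributions regardless of their signs. Specifically, bounding $\big|\frac{t^q}{q}\int a|u_0|^q\big|$ by a constant multiple of $t^q$, one immediately gets
\begin{equation*}
I(tu_0) \;\le\; C_1 t^2 + C_2 t^q - \frac{\gamma}{p}\,t^p \;\longrightarrow\; -\infty.
\end{equation*}
Under $(D_5)$ the situation is even cleaner, since $a\le 0$ forces $-\frac{t^q}{q}\int a|u_0|^q\ge 0$, so only the comparison between $t^2$ and $t^p$ is needed; under $(D_4)$ no sign control on $a$ is assumed, but the comparison $p>q$ alone suffices to absorb it.

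There is no real obstacle here. The only mildly delicate point is membership $u_0\in\HA$, which is handled once one observes that $L^2_{loc}$ magnetic potentials interact harmlessly with compactly supported smooth test functions; everything else is the one-variable asymptotic inequality above. The conclusion $\inf_{\HA} I = -\infty$ follows by choosing $t_n\to\infty$ and setting $u_n = t_n u_0 \in \HA$.
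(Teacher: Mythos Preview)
Your argument is correct and follows essentially the same route as the paper: pick $u_0$ with $\int_{\R^N} b|u_0|^p>0$ and send $t\to\infty$ along the ray $tu_0$, using $2<q<p$ so that the $t^p$ term dominates. You are in fact more careful than the paper in explicitly constructing $u_0\in C_c^\infty$ supported inside $\{b>0\}$ and checking that $u_0\in\HA$; the paper simply asserts the existence of such a $u$. One small slip: your aside about $(D_5)$ is backwards --- when $a\le 0$ the term $-\tfrac{t^q}{q}\int a|u_0|^q$ is \emph{nonnegative}, so it works \emph{against} you and must still be absorbed by the $t^p$ term (which it is, since $p>q$); your general bound $I(tu_0)\le C_1 t^2 + C_2 t^q - \tfrac{\gamma}{p}t^p$ already handles both cases uniformly, so the remark is unnecessary anyway.
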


\begin{proof} Considering without loss of generality that $\int_{\R^N } b(x)|u|^{p}dx > 0$, for some $u \in \HA$ with $u > 0$ in $\R^N$ and consider $t > 0$, like this
	{\small
		\begin{eqnarray*}
			I (tu) 	&=&\frac{t^2}{2}||u||^2_A -\frac{ t^{q}}{q }\int_{\R^N}a(x)|u|^{q}dx - \frac{t^{p}}{p}\int_{\R^N}b(x)|u|^{p}dx\\
			&=&t^{p}\!\left(\frac{1}{2 t^{p-2}}||u||^2_A\! -\frac{1}{qt^{p-q}}\!\int_{\R^N }a(x)|u|^{q}dx-\frac{1}{p}\! \int_{\R^N }b(x)|u|^{p}dx\right).
	\end{eqnarray*}}
	Taking $t \rightarrow \infty$,  as $ 2 < q < p < 2^*$, we have that $I (tu) \rightarrow -\infty$, that is, $I $ is not bounded below in $\HA.$ 
\end{proof}

\subsection{Nehari Manifold associated with $\Pd$}
We want to find a subset of $ \HA $, where the functional $ I $ is well behaved, that is, where this function is bounded below. We then define
$$\M=\{u \in \HA\setminus\{0\} :\langle I'(u),u\rangle =0\}.$$ 
$\M $ is the Nehari manifold associated with functional $I$. Therefore,
$$ u\in \M \Leftrightarrow I' (u)u=0 \Leftrightarrow $$
\begin{equation}\label{u.in.nehari}
||u||_A^2 -  \int_{\R^N } a(x) |u|^{q } dx - \int _{\R^N } b(x) |u|^{p}dx=0 .
\end{equation} 
We note that $ \M \subset \HA $ and now we will find the functional set on the Nehari manifold. For $ u \in \M$
\begin{eqnarray*}
	I (u) & = &  I (u)- \frac{1}{q}I'(u)u\\
	& = &   \left(  \frac{1}{2} - \frac{1}{q } \right) ||u||_A^2-\left(  \frac{1}{p }- \frac{1}{q } \right)  \int_{\R^N }b(x)|u|^{p }dx;\\
	& = &   \left(  \frac{1}{2} - \frac{1}{p } \right) ||u||_A^2-\left(  \frac{1}{q }- \frac{1}{p } \right)  \int_{\R^N }a(x)|u|^{q }dx.
\end{eqnarray*}
We will now see that the $ I$ functional is well behaved in the Nehari manifold.

\begin{corollary}\label{cor2.3}
	Assume that ($ D_4 $) or ($ D_5 $) is satisfied, then the functional $ I $ is bounded from below in $\M.$
\end{corollary}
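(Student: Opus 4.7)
The plan is to exploit the two equivalent reformulations of $I(u)$ for $u \in \M$ that were derived in the discussion immediately preceding the corollary:
\[
I(u) = \left(\tfrac{1}{2}-\tfrac{1}{p}\right)\|u\|_A^2 - \left(\tfrac{1}{q}-\tfrac{1}{p}\right)\int_{\R^N} a(x)|u|^q\, dx
\]
and
\[
I(u) = \left(\tfrac{1}{2}-\tfrac{1}{q}\right)\|u\|_A^2 + \left(\tfrac{1}{q}-\tfrac{1}{p}\right)\int_{\R^N} b(x)|u|^p\, dx.
\]
The key remark is that the strict inequalities $2<q<p$ force all three constants $\tfrac{1}{2}-\tfrac{1}{p}$, $\tfrac{1}{2}-\tfrac{1}{q}$, and $\tfrac{1}{q}-\tfrac{1}{p}$ to be strictly positive.

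With this observation, the argument splits cleanly according to the hypothesis. Under $(D_5)$, the sign condition $a\leq 0$ gives $\int_{\R^N} a(x)|u|^q\, dx \leq 0$, so the first reformulation yields $I(u) \geq (\tfrac{1}{2}-\tfrac{1}{p})\|u\|_A^2 \geq 0$ for every $u\in\M$. Under $(D_4)$, the sign condition $b\geq 0$ gives $\int_{\R^N} b(x)|u|^p\, dx \geq 0$, so the second reformulation yields $I(u) \geq (\tfrac{1}{2}-\tfrac{1}{q})\|u\|_A^2 \geq 0$ for every $u\in\M$. In both cases $\inf_{\M} I \geq 0 > -\infty$, giving the desired conclusion (in fact the sharper bound $I\geq 0$ on $\M$).

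There is really no serious obstacle here: the whole point is to pair the correct rewriting of $I$ on $\M$ with the correct sign hypothesis on the weights. The only subtlety is keeping track of signs; in particular, $\tfrac{1}{2}-\tfrac{1}{q}$ is \emph{positive} (not negative) because $q>2$, which ensures that under $(D_4)$ both terms in the second identity contribute non-negatively.
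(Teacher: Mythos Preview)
Your argument is correct and is essentially identical to the paper's own proof: both use the two reformulations of $I(u)$ on $\M$ (obtained by eliminating either the $a$-integral or the $b$-integral via the Nehari constraint) and then pair each with the appropriate sign hypothesis $(D_4)$ or $(D_5)$ to conclude $I(u)\geq 0$. The only cosmetic difference is that you write the $b$-term with the positive coefficient $\tfrac{1}{q}-\tfrac{1}{p}$ whereas the paper writes it as $-\bigl(\tfrac{1}{p}-\tfrac{1}{q}\bigr)$.
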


\begin{proof} In fact, by (\ref {u.in.nehari}) and by ($ D_4 $) we have
	\begin{eqnarray} \label{ltdo2}
	\nonumber I(u)&=& \frac{1}{2}	||u||^2_A -\frac{1}{q} \int_{\R^N } a(x) |u|^{q } dx -\frac{1}{p} \int _{\R^N } b(x) |u|^{p}dx \\
	\nonumber &= &\left( \frac{1}{2}-\frac{1}{q}\right) 	||u||^2_A-\left( \frac{1}{p}- \frac{1}{q}\right) \int_{\R^N } b(x) |u|^{p } dx \geq 0,
	\end{eqnarray}
	yet, when ($ D_5 $) is satisfied
	\begin{eqnarray} \label{ltdo1}
	\nonumber I(u)&=& \frac{1}{2}	||u||^2_A -\frac{1}{q} \int_{\R^N } a(x) |u|^{q } dx -\frac{1}{p} \int _{\R^N } b(x) |u|^{p}dx \\
	\nonumber &= &\left( \frac{1}{2}-\frac{1}{p}\right) 	||u||^2_A-\left( \frac{1}{q}- \frac{1}{p}\right) \int_{\R^N } a(x) |u|^{q } dx \geq 0,
	\end{eqnarray}
	which concludes the boundness from bellow.
	
\end{proof}

We will now make some considerations and we will present some properties of the manifold and its relation with the fibering map.

\subsection{Fibering Map}

We will now define the fibering map associated with the functional $ I $, which are the functions of the form $ T_u: t \rightarrow I (tu); \; \ (t> 0) $, we will analyze its behavior and show its relation with the manifold of Nehari.

If $u \in  \HA$, we have
\begin{equation}\label{T}
T _u(t)=\frac{t^2}{2} ||u||_A^2  -\frac{t^{q}}{q}\inta - \frac{t^{p}}{p}\intb,
\end{equation}
\begin{equation}\label{T'}
T'_u(t)= t ||u||_A^2 -  t^{q-1} \inta -  t^{p-1} \intb,
\end{equation}
\begin{equation}\label{T''}
T''_u(t)= ||u||_A^2 - (q-1) t^{q-2}  \inta - (p-1) t^{p-2} \intb.
\end{equation}

The proposition below relates the Nehari manifold and the Fibering map.

\begin{prop}
Let $ T_u $ be the application defined above and $ u \in \HA $, then
	\begin{description}
		\item[	$(i)$   ] $ u \in \M $ if and only if, $T_u'(1)=0$;
		\item[	$(ii)$   ] More generally $tu \in \M $ if and only if, $T'_u (t)=0$. 
	\end{description}
	\end{prop}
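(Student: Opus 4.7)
The plan is to show that both statements reduce to matching two algebraic expressions, using only the formulas (\ref{T'}) for $T_u'$ and (\ref{u.in.nehari}) characterizing $\M$. No deep ingredient is needed; the proposition is essentially the identity $\langle I'(tu), tu\rangle = t\, T_u'(t)$ specialized at $t=1$ and at general $t>0$.

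For part $(i)$, I would evaluate the expression (\ref{T'}) at $t=1$, obtaining
$$T_u'(1) = \|u\|_A^2 - \int_{\R^N} a(x)|u|^q\,dx - \int_{\R^N} b(x)|u|^p\,dx,$$
and observe that this is exactly the quantity which, by (\ref{u.in.nehari}), vanishes if and only if $u\in\M$ (recalling that $\M$ excludes $0$, so one should also remark that $T_u'(1)$ is well defined for any $u \in \HA$ and the equivalence includes the condition $u\neq 0$, which is implicit since we are discussing membership in $\M$).

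For part $(ii)$, I would compute $T_u'(t)$ and multiply by $t>0$ to get
$$t\, T_u'(t) = t^2\|u\|_A^2 - t^q \int_{\R^N} a(x)|u|^q\,dx - t^p \int_{\R^N} b(x)|u|^p\,dx,$$
and then note that the right-hand side equals
$$\|tu\|_A^2 - \int_{\R^N} a(x)|tu|^q\,dx - \int_{\R^N} b(x)|tu|^p\,dx = \langle I'(tu), tu\rangle,$$
which is the Nehari condition (\ref{u.in.nehari}) for $tu$. Since $t>0$, we have $T_u'(t)=0$ if and only if $t\,T_u'(t)=0$, and the latter is equivalent to $tu \in \M$ (again, provided $tu\neq 0$, which is ensured by $t>0$ and $u\neq 0$, a condition we can assume since otherwise both sides trivially fail).

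There is no genuine obstacle here: the proposition is just a direct translation between the derivative of the fibering map and the defining equation of the Nehari manifold. The only minor care needed is the bookkeeping of the nonzero requirement in the definition of $\M$ and the positivity of $t$, so that the factor of $t$ can be divided out when passing between $T_u'(t)=0$ and $\langle I'(tu), tu\rangle = 0$.
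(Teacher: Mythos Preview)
Your proposal is correct and is exactly the natural argument: the paper itself omits the proof of this proposition (treating it as immediate from the definitions), and your computation---evaluating (\ref{T'}) at $t=1$ for part $(i)$, and using $t\,T_u'(t)=\langle I'(tu),tu\rangle$ for part $(ii)$---is precisely the intended verification.
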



From the definitions made, we will analyze the behavior of the application fiber in order to obtain information about our functional.

\begin{remark}
	Note that if $ u \in \M $, that is, $ T'_u (1) = 0 $, then
	\begin{eqnarray}\label{e0}
	T'' _u(1)&=& (2-q) ||u||_A^2  - (p-q) \int_{\R^N } b(x)|u|^{p }dx  \\
	&=& (2-p) ||u||_A^2  - (q-p) \int_{\R^N } a(x)|u|^{q }dx.
	\end{eqnarray}
\end{remark}

Note that the essential nature of the fibering map $ T_u $ is determined by the signal of $ \int_{\R^N } a(x)|u|^{q }dx$ and $ \int_{\R^N } b(x)|u|^{p }dx $. In fact, consider the function
\begin{equation}\label{e1}
m_u(t)=\frac{1}{ t^{q-2}} ||u||_A^2  - t^{p-q}\int_{\R^N } b(x)|u|^{p }dx; \;\;\;t>0
\end{equation}

\begin{remark}
	Note that, for $t>0$, $tu \in \M $ if and only if $t$ is a solution of
	\begin{equation}\label{e2}
	m_u(t)=\int_{\R^N }a(x)|u|^{q }dx.
	\end{equation}
\end{remark}

In fact, by replacing (\ref{e1}) in (\ref{e2}), we have
$$   \int_{\R^N }a(x)|u|^{q }dx=t^{2-q} ||u||_A^2  - t^{p-q}\int_{\R^N } b(x)|u|^{p }dx,$$
$$0=t^{2-q} ||u||_A^2 -  \int_{\R^N }a(x)|u|^{q }dx- t^{p-q}\int_{\R^N } b(x)|u|^{p }dx.$$
Multiplying the above equation by $t^{q }$
$$0=t^2 ||u||_A^2 -   t^{q }\int_{\R^N }a(x)|u|^{q }dx- t^{p }\int_{\R^N } b(x)|u|^{p }dx,$$
or equivalently $I'(tu)tu=0.$ Therefore, $tu \in \M.$ Also, deriving (\ref{e1}) we get
\begin{equation}\label{e3}
m'_u(t)=(2-q)t^{1-q} ||u||_A^2  - (p-q)t^{p-q-1}\int_{\R^N } b(x)|u|^{p }dx.
\end{equation}

Let us now analyze the behavior of $ m_u $ for the following cases.
\begin{description}
	\item[$(i)$   ]  When $\int_{\R^N } b(x)|u|^{p }dx> 0$, $m_u$ is a strictly decreasing function.   
\end{description}
In fact, where $2<q<p$ for $t>0$
$$m'_u(t)=(2-q)t^{1-q} ||u||_A^2  - (p-q)t^{p-q-1}\int_{\R^N } b(x)|u|^{p }dx  <0,$$
whenever $\int_{\R^N } b(x)|u|^{p }dx> 0 $.
In addition, if $t \rightarrow0$ then $m_u(t)\rightarrow +\infty.$ Now, if $t \rightarrow \infty$ then $m_u(t)\rightarrow -\infty.$ Thus we conclude that $m_u(t)$ has a single point of inflection in $t_i=\left( \frac{(2-q)||u||^2_A}{(p-q)\int_{\R^N } b(x)|u|^{p }dx} \right)^{\frac{1}{p-2}}<0,$ and its graph has a sketch as in the figure \ref{fig1}.

\begin{figure}[h]
		\centering
		\includegraphics[scale=0.4]{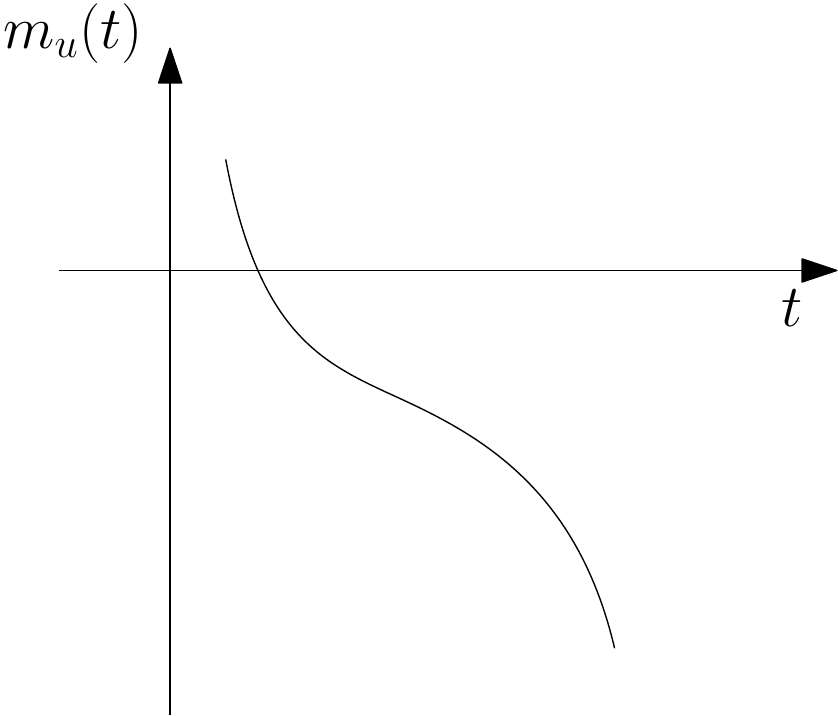}
	\caption{ $m_u$ sketch when $\int_{\R^N }b(x)|u|^{p }dx > 0 $}
	\label{fig1}
\end{figure}

\begin{description}
	\item[$(ii)$   ] When $\int_{\R^N } b(x)|u|^{p }dx= 0$, $m_u$ is also a strictly decreasing function.    
\end{description}

In fact, for $ t> 0 $ then
$$m'_u(t)=(2-q)t^{1-q} ||u||_A^2  <0,$$
whenever $\int_{\R^N } b(x)|u|^{p }dx= 0 $.
Furthermore, if $t \rightarrow0$ then $m_u(t)\rightarrow +\infty.$ If $t \rightarrow \infty$, then $m_u(t)\rightarrow 0.$ In this way we conclude that $ m_u (t) $ has the graph as in the figure \ref{fig2}.

\begin{figure}[h]
	\begin{center}
		\includegraphics[scale=0.3]{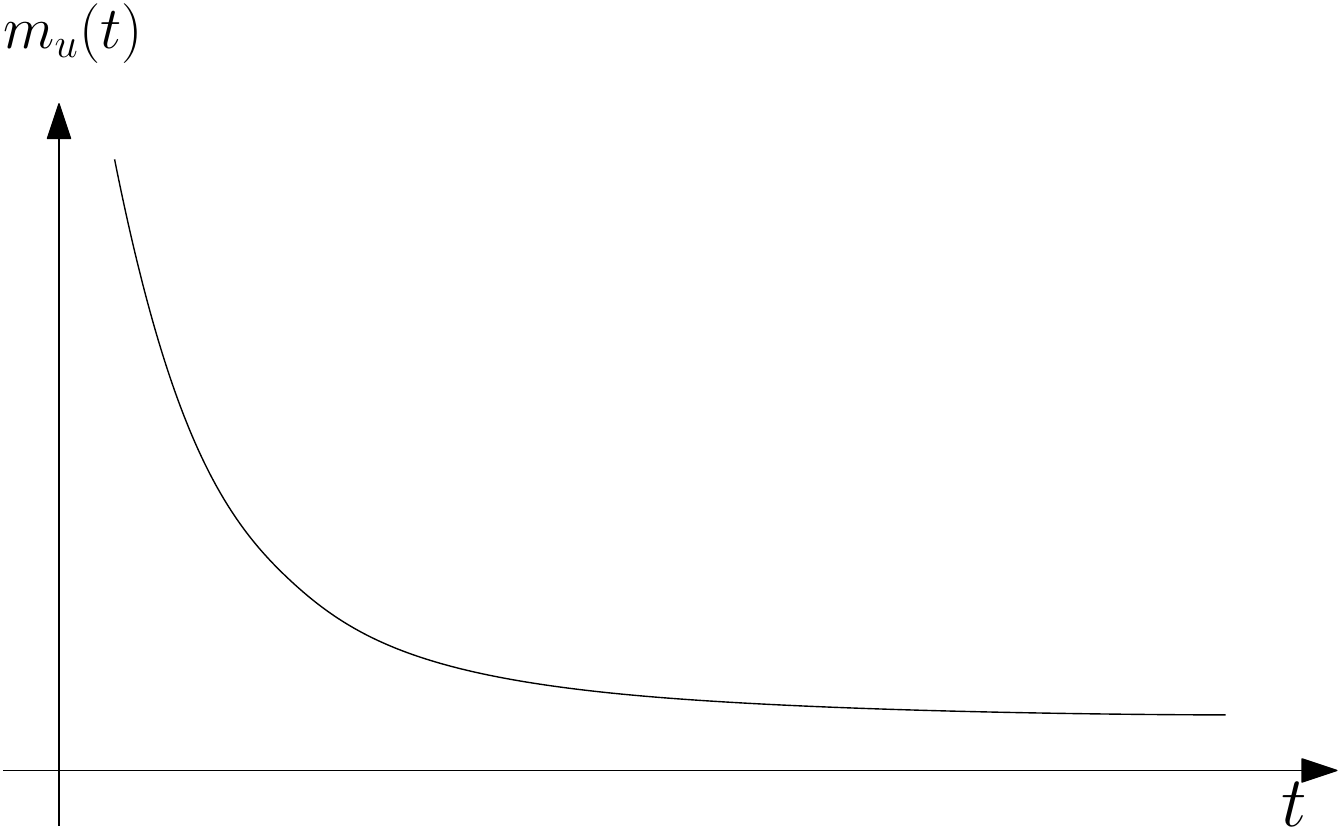}
	\end{center}
	\caption{$ m_u $ sketch when $\int_{\R^N }b(x)|u|^{p }dx = 0 $}
	\label{fig2}
\end{figure}

\begin{description}
	\item[$(iii)$   ]   When $\int_{\R^N } b(x)|u|^{p }dx< 0$. 
\end{description}
In this case $m_u$ is a decreasing and then increasing function with a single critical point in $t_{\min}=\left( \frac{(2-q)||u||^2_A}{(p-q)\int_{\R^N } b(x)|u|^{p }dx} \right)^{\frac{1}{p-2}}.$ In addition, $m_u(t)>0$ for all $t>0$. 
Noting that
$$ \lim_{t \rightarrow 0} m_u(t)= \infty \;\;\mbox {     and    }\;\;
\lim_{t \rightarrow \infty} m_u(t)= \infty,$$
we can conclude that $ m_u $ has a graph as in the figure \ref{fig3}.

\begin{figure}[h]
	\begin{center}
		\includegraphics[scale=0.25]{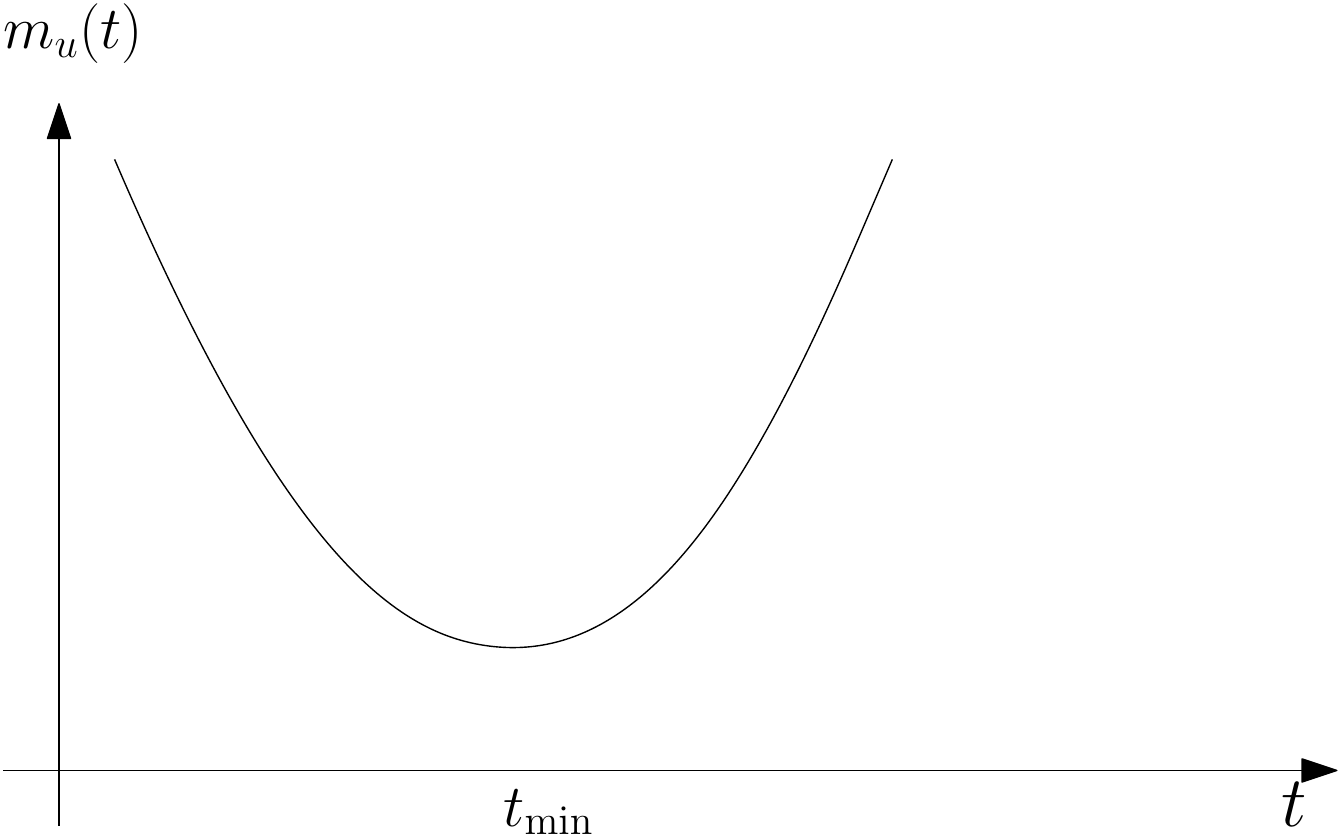}
	\end{center}
	\caption{$ m_u $ sketch when $\int_{\R^N }b(x)|u|^{p }dx <0 $}
	\label{fig3}
\end{figure}

\begin{remark} It is important to note that if $tu \in \M $, by (\ref{e0}) and (\ref{e1}) we have
	$$T''_{tu}(1)=t^{q+1}m'_u(t).$$
\end{remark}

In fact,
\begin{eqnarray*}
	T'' _{tu}(1) & = & (2-q)t^2 ||u||_A^2  - (p-q) t^{p} \int_{\R^N } b(x)|u|^{p }dx  \\
	& = & t^{q+1}((2-q)t^{1-q} ||u||_A^2  - (p-q) t^{p-q-1}\int_{\R^N } b(x)|u|^{p }dx ) \\
	& = & t^{q+1}m'_u(t).
\end{eqnarray*}
This observation is fundamental, because if we know the signal of $ m'_u (t) $, we will know the signal of $ T''_{tu}(t) $. Thus we can know if $ T_{tu} $ has a local minimum, maximum local or inflection point.

\subsubsection{ Function description $ T_u $}
Let us now see the description of the nature of the fibering map for cases where $ (D_4 $) or $ (D_5) $ is satisfied. 
\begin{description}
	\item[$(I)$   ]    When $(D_4)$ is satisfied, there will be  $u's \in \HA$, such that $\int_{\R^N } b(x)|u|^{p }dx>0$ with strict inequality. Looking at the graph that we construct in item $ (i) $ above, there are $ t_u's $, solution of (\ref{e2}) for any value of $ \int_{\R^N } a(x)|u|^{q }dx$.In these conditions, for each $ u $ such that $\int_{\R^N } b(x)|u|^{p }dx>0$, there exists a unique $ t_u> 0 $, such that $t_uu \in \M.$ Also, $t_u>0$ is a maximum point for $ T_ {tu} $, since $T''_{tu}(1)=t^{q+1}m'_u(t)<0$ in this case.From this analysis, we conclude that the graph $ T_u $ has its sketch as shown in the figure \ref{fig4}.
\end{description}

\begin{figure}[h]
	\begin{center}
		\includegraphics[scale=0.4]{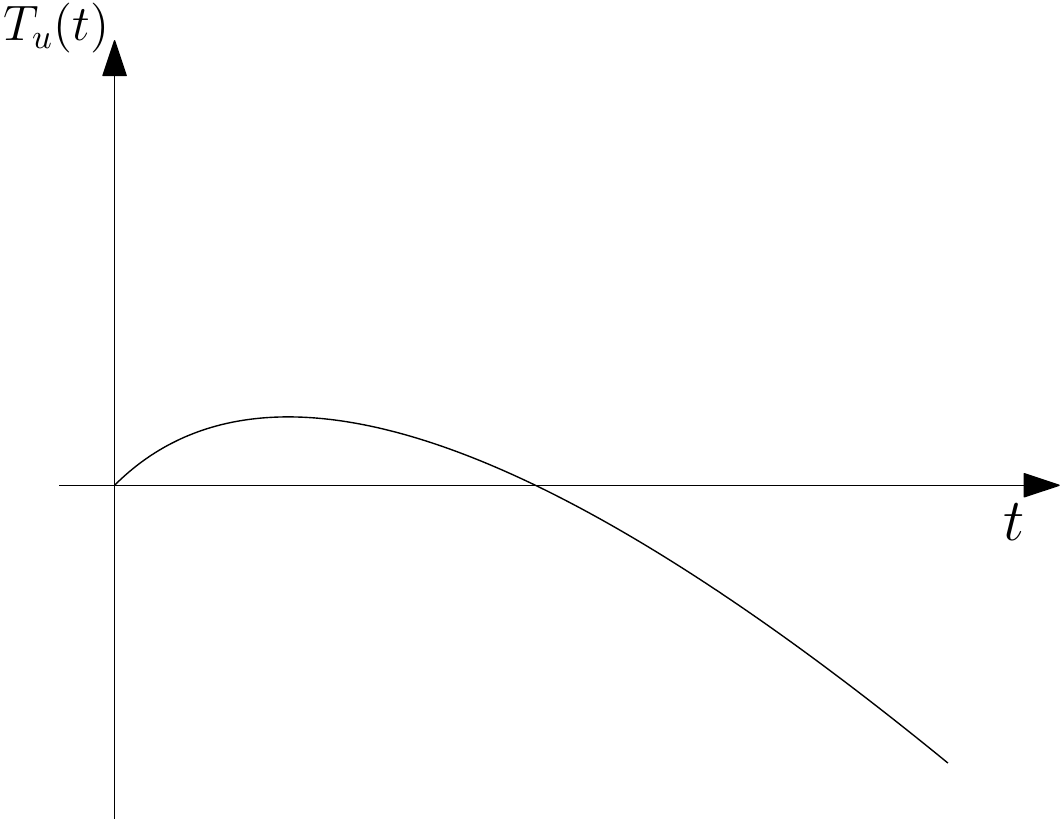}
	\end{center}
	\caption{Possible form of $ T_u $ when $\int_{\R^N }b(x)|u|^{p }dx > 0$}
	\label{fig4}
\end{figure}

\begin{description}
	\item[ $(II)$  ] In the case where the hypothesis $ (D_5) $ is satisfied we have $ \int_{\R^N } a(x)|u|^{q }dx\leq0$. Observing the graphs \ref{fig1}, \ref{fig2} e \ref{fig3}, We see that equation ($\ref{e2}$) only has solution when $\int_{\R^N } b(x)|u|^{p }dx>0.$ Conditions that have already been analyzed in item $(I)$.    
\end{description}
 
From these observations we have the following.
\begin{lemma}\label{lema2.2}
	Suppose the hypothesis ($ D_1 $) is satisfied and that $a$, $b \; \in L^{\infty}(\R^N)$.
	\begin{description}
		\item[ $(i)$  ] If ($D_4$) is satisfied and we also have $\int_{\R^N } b(x)|u|^{p }dx > 0$ or $\int_{\R^N } a(x)|u|^{q }dx > 0$, then the equation $ T'_u (t) = 0 $ has exactly one solution $t_u > 0$. Also, $I(u) > 0$ for all $u \in \M  $;   
		\item[  $(ii)$ ]  If ($D_5$) is satisfied and $\int_{\R^N } b(x)|u|^{p }dx > 0$, then the equation $ T'_u (t) = 0 $ has exactly one solution $t_u > 0$. Also, $I(u) > 0$ for all $u \in \M  $.  
	\end{description}

\end{lemma}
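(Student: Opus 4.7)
The plan is to read off both assertions directly from the classification of the auxiliary map $m_u$ carried out in the previous subsection, combined with the equivalence $tu\in\M \Leftrightarrow m_u(t)=\int_{\R^N}a(x)|u|^q\,dx$. In this way, existence and uniqueness of $t_u>0$ reduces to counting positive solutions of the level equation $m_u(t)=c$ with $c:=\int_{\R^N}a(x)|u|^q\,dx$, where the shape of $m_u$ has already been described in cases $(i)$--$(iii)$ according to the sign of $\int_{\R^N}b(x)|u|^p\,dx$.

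For part $(i)$, I split inside hypothesis $(D_4)$, which forces $\int_{\R^N}b(x)|u|^p\,dx\geq 0$, into two subcases. If the integral is strictly positive, case $(i)$ of the $m_u$-analysis applies: $m_u$ is strictly decreasing on $(0,\infty)$ from $+\infty$ to $-\infty$, so the equation $m_u(t)=c$ has exactly one positive root regardless of the sign of $c$. If instead the integral vanishes, the alternative hypothesis $c=\int_{\R^N}a(x)|u|^q\,dx>0$ is in force, and case $(ii)$ gives $m_u(t)=t^{2-q}||u||_A^2$ strictly decreasing from $+\infty$ to $0$; hence it hits the strictly positive level $c$ at exactly one $t_u$. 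For part $(ii)$, $(D_5)$ makes $c\leq 0$, while the assumption $\int_{\R^N}b(x)|u|^p\,dx>0$ places us again in case $(i)$, whose graph meets every horizontal line exactly once.

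Positivity of $I$ on $\M$ follows from the two Nehari identities derived just before Corollary \ref{cor2.3}. Under $(D_4)$ one uses
\[
I(u)=\left(\tfrac{1}{2}-\tfrac{1}{q}\right)||u||_A^2\;-\;\left(\tfrac{1}{p}-\tfrac{1}{q}\right)\int_{\R^N}b(x)|u|^p\,dx,
\]
and both coefficients $\tfrac{1}{2}-\tfrac{1}{q}$ and $-\!\left(\tfrac{1}{p}-\tfrac{1}{q}\right)=\tfrac{1}{q}-\tfrac{1}{p}$ are positive because $2<q<p$; together with $\int_{\R^N}b(x)|u|^p\,dx\geq 0$ and $u\neq 0$ this forces $I(u)>0$. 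Under $(D_5)$ one uses instead
\[
I(u)=\left(\tfrac{1}{2}-\tfrac{1}{p}\right)||u||_A^2\;-\;\left(\tfrac{1}{q}-\tfrac{1}{p}\right)\int_{\R^N}a(x)|u|^q\,dx;
\]
here $\tfrac{1}{2}-\tfrac{1}{p}>0$, while $\tfrac{1}{q}-\tfrac{1}{p}>0$ combined with $\int_{\R^N}a(x)|u|^q\,dx\leq 0$ makes the second contribution non-negative, so again $I(u)>0$.

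The argument has no serious analytic obstacle; the only point requiring care is the bookkeeping, namely verifying that case $(iii)$ of the $m_u$-classification (with $\int_{\R^N}b(x)|u|^p\,dx<0$) cannot occur under either set of hypotheses of the lemma, and that in the borderline subcase $\int_{\R^N}b(x)|u|^p\,dx=0$ of part $(i)$ the monotone branch of $m_u$ must actually reach the prescribed positive level $c$, which is exactly what the extra assumption $\int_{\R^N}a(x)|u|^q\,dx>0$ supplies.
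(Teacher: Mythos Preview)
Your proposal is correct and follows essentially the same route as the paper: the lemma is stated there as an immediate consequence of the preceding classification of $m_u$ (cases $(i)$--$(iii)$) together with the description of $T_u$ in items $(I)$ and $(II)$ and the Nehari identities used in Corollary~\ref{cor2.3}. If anything, your write-up is slightly more complete, since you handle explicitly the borderline subcase $\int_{\R^N}b(x)|u|^p\,dx=0$ under $(D_4)$, which the paper's informal discussion in item $(I)$ glosses over.
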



\subsection{Properties of the Nehari manifold}
Next we will see that under certain assumptions the Nehari manifold is indeed a manifold.

\begin{lemma}\label{lema2.3}
	
Suppose that ($ D_1 $) and also ($ D_4 $) or ($ D_5 $) is satisfied. Then the Nehari manifold is a $ C^2 $ manifold, closed and such that $ || u ||_A \geq \delta> 0 $, for all $u \in \M $.
\end{lemma}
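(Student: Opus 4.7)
The plan is to realise $\M$ as a regular level set of the constraint functional
$$\Phi(u) := \langle I'(u), u\rangle = \|u\|_A^2 - \int_{\R^N} a(x)|u|^q\,dx - \int_{\R^N} b(x)|u|^p\,dx,$$
so that $\M = \Phi^{-1}(0)\setminus\{0\}$. Since $I$ is $C^2$ on $\HA$ under ($D_1$), so is $\Phi$. Under ($D_1$), H\"older's inequality followed by the continuous Sobolev embeddings of $\HA$ into $L^{qr/(r-1)}$ and $L^{ps/(s-1)}$ yields
$$\Bigl|\int_{\R^N} a(x)|u|^q\,dx\Bigr|\le C_1\|u\|_A^q,\qquad \Bigl|\int_{\R^N} b(x)|u|^p\,dx\Bigr|\le C_2\|u\|_A^p.$$
First I would combine these with the Nehari identity $\|u\|_A^2=\int a|u|^q+\int b|u|^p$ to get $\|u\|_A^2\le C_1\|u\|_A^q+C_2\|u\|_A^p$; since $2<q<p$, dividing by $\|u\|_A^2$ produces a uniform positive lower bound $\|u\|_A\ge\delta>0$ valid for every $u\in\M$.

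Closedness then drops out: if $u_n\in\M$ converges to $u$ in $\HA$, the lower bound forces $u\ne 0$, while continuity of $\Phi$ gives $\Phi(u)=0$, so $u\in\M$.

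For the $C^2$-manifold structure it suffices, by the implicit function theorem, to check that $\Phi'(u)\ne 0$ on $\M$, and the handiest way is to show $\langle\Phi'(u),u\rangle\ne 0$. A direct computation yields
$$\langle\Phi'(u),u\rangle=2\|u\|_A^2-q\int_{\R^N} a(x)|u|^q\,dx-p\int_{\R^N} b(x)|u|^p\,dx,$$
and eliminating $\int a|u|^q$ (respectively $\int b|u|^p$) via the Nehari identity produces the two equivalent expressions
$$\langle\Phi'(u),u\rangle=(2-q)\|u\|_A^2+(q-p)\int_{\R^N} b(x)|u|^p\,dx=(2-p)\|u\|_A^2+(p-q)\int_{\R^N} a(x)|u|^q\,dx.$$
Under ($D_4$) the first expression is a sum of two nonpositive terms with the first one strictly negative (since $2-q<0$, $q-p<0$ and $\int b|u|^p\ge 0$); under ($D_5$) the second expression is strictly negative by the mirror argument ($2-p<0$, $p-q>0$ and $\int a|u|^q\le 0$). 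Either way $\langle\Phi'(u),u\rangle<0$, so $\Phi'(u)\ne 0$, and therefore $\M$ is a $C^2$ submanifold of $\HA\setminus\{0\}$ of codimension one.

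The hard part will be precisely this sign analysis: because $a$ and $b$ may individually change sign, one must leverage ($D_4$) or ($D_5$) through whichever of the two algebraically equivalent forms of $\langle\Phi'(u),u\rangle$ absorbs the weighted integral of indeterminate sign. Everything else — continuity and $C^2$-regularity of $\Phi$, and the implicit-function-theorem step — is routine once the Sobolev estimates coming from ($D_1$) are in place.
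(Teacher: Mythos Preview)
Your proposal is correct and follows essentially the same route as the paper: define the constraint functional $\Phi(u)=\langle I'(u),u\rangle$, use H\"older plus Sobolev under ($D_1$) together with the Nehari identity to get the uniform lower bound $\|u\|_A\ge\delta$, deduce closedness, and then verify $\langle\Phi'(u),u\rangle<0$ by choosing, according to whether ($D_4$) or ($D_5$) holds, the algebraic rearrangement that renders both terms nonpositive. The paper's argument and yours are effectively identical in structure and detail.
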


\begin{proof}
Let $ u \in \M $, a direct consequence of the definition of the manifold gives us
	\begin{equation*}
	\nonumber	||u||^2_A  = \int_{\R^N } a(x) |u|^{q } dx + \int _{\R^N } b(x) |u|^{p}dx.
	\end{equation*}
	Hence, by H\"{o}lder and Sobolev, by $ (D_1) $ and by the diamagnetic inequality we have
	\begin{eqnarray}
	\nonumber	||u||^2_A  &\leq & ||a||_r ||u||^q_{qr'} +  ||b||_s ||u||^p_{ps'}\\
	\label{2.5} 	& \leq&  c_1 ||a||_r ||u||^q_A + c_2 ||b||_s ||u||^p_A, 
	\end{eqnarray}
	where $r' = \frac{r}{r-1}$ , $ s' = \frac{s}{s-1}$ and $c_1$, $c_2$ are positive constants. Dividing (\ref{2.5}) by $ || u ||^2_A $ we get
	\begin{equation}\label{2.5.2}
	1 \leq c_1 ||a||_r ||u||^{q-2}_A + c_2 ||b||_s ||u||^{p-2}_A.
	\end{equation}
	Assume by contradiction that there exists a sequence $ \{u_n \} \in \M $ such that $ || u_n ||_A \rightarrow 0, $ as $ n \rightarrow \infty$. Then, like $ 2 <q <p $, by (\ref{2.5.2}) we get that $ 1 \leq 0 $ which is absurd. It follows that there is $\delta>0$ such that $||u||_A\geq \delta>0$, for all $u \in \M $.
	
	We will now show that the Nehari manifold is closed and $ C^2$. 
	Define $ \alpha   : X \rightarrow  \R$ by 
	$$ \alpha  (u) := \langle  J' (u) , u  \rangle = ||u||^2_A  - \int_{\R^N } a(x) |u|^{q } dx - \int _{\R^N } b(x) |u|^{p}dx  .$$
	See that $ \alpha \in C^2 $ and by the definition of $ \alpha $, we have to $ \M   = \alpha^{- 1}(0) \setminus \{0\}.$ 
	The fact that $ \delta> 0 $ exists such that $ || u ||_A \geq \delta> 0 $, for every $ u \in \M $, gives $ \M $ is closed.    
	We need to show that $ 0 $ is a regular value of $ \alpha $, that is, for all $u \in \M$, $ \alpha'(u)$ is a linear overhead transformation. Since the application image $\alpha $ is $\R$,that is, it is a space of one dimension, just show that $ \alpha' (u) \neq 0$ for all $u \in \M$.
	Note that for every $u \in \M$
	\begin{equation}\label{2.6i}
	||u||_A^2= \int_{\R^N } a(x) |u|^{q } dx + \int _{\R^N } b(x) |u|^{p}dx.
	\end{equation}
	Also,
	\begin{equation}\label{2.7}
	\langle\alpha'(u),u \rangle = 2||u||_A^2-q \int_{\R^N } a(x) |u|^{q } dx -p \int _{\R^N } b(x) |u|^{p}dx.
	\end{equation} 
	Now, by (\ref{2.6i}) and (\ref{2.7}) we have \begin{equation}\label{2.8}
	\langle\alpha'(u),u \rangle = (2-q)||u||_A^2+ (q-p)\int _{\R^N } b(x) |u|^{p}dx.
	\end{equation}
	If ($D_4$) is satisfied then $\int _{\R^N } b(x) |u|^{p}dx \geq 0$ and by (\ref{2.8})
	\begin{equation}\label{2.8.i}
	\langle\alpha'(u),u \rangle <0.
	\end{equation}
	Also, by (\ref{2.6i}) and (\ref{2.7}) we have \begin{equation}\label{2.8.1}
	\langle\alpha'(u),u \rangle = (2-p)||u||_A^2+ (p-q)\int _{\R^N } a(x) |u|^{q}dx.
	\end{equation}
	Hence, if ($ D_5 $) is satisfied then $\int _{\R^N } a(x) |u|^{p}dx \leq 0$ and by (\ref{2.8.1})
	\begin{equation}\label{2.8.ii}
	\langle\alpha'(u),u \rangle <0.
	\end{equation}
Thus, by (\ref{2.8.i}) and (\ref{2.8.ii}) follows that $ \alpha'(u) \neq 0 $. We conclude that $ 0 $ is a regular value of $ \alpha $, giving us that the Nehari manifold is a fact manifold and $ C^2 $ class.
	
\end{proof}

We will now show that the same conclusions from the previous lemma are also valid when we consider the hypotheses described in the following lemma.

\begin{lemma}\label{lema2.4}
	Assume that $a, b \in L^{\infty} (\R^N   )$ and also ($D_4$) or ($D_5$) are satisfied. Then the Nehari manifold is a $C^2$ manifold, closed and such that $||u||_A\geq \delta>0$, for all $u \in \M $
\end{lemma}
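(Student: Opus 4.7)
The plan is to mimic the structure of Lemma \ref{lema2.3}, replacing only the step where the integrability hypothesis on $a,b$ is used, namely the a priori bound $\|u\|_A\geq\delta>0$. For $u\in\M$ one has
\begin{equation*}
\|u\|_A^2 = \int_{\R^N} a(x)|u|^q\,dx + \int_{\R^N} b(x)|u|^p\,dx,
\end{equation*}
and since now $a,b\in L^\infty(\R^N)$ I would simply estimate the right-hand side by
\begin{equation*}
\|u\|_A^2 \leq \|a\|_\infty\,\|u\|_q^q + \|b\|_\infty\,\|u\|_p^p.
\end{equation*}
Combining this with the continuous embedding $\HA\hookrightarrow L^r(\R^N)$ for $r\in[2,2^*]$ (which in the magnetic setting follows from the diamagnetic inequality together with the usual Sobolev embedding for $H^1(\R^N)$) yields $\|u\|_A^2 \leq c_1\|u\|_A^q + c_2\|u\|_A^p$. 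Dividing by $\|u\|_A^2>0$ and using $2<q<p$ rules out the possibility $\|u_n\|_A\to 0$ for a sequence in $\M$, giving a uniform $\delta>0$ with $\|u\|_A\geq\delta$.

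Next I would define $\alpha(u):=\langle I'(u),u\rangle$, so that $\M=\alpha^{-1}(0)\setminus\{0\}$. The fact that $a,b\in L^\infty(\R^N)$ is enough to guarantee that $\alpha\in C^2(\HA,\R)$ (the nonlinear terms remain $C^2$ since the test integrals are dominated by Sobolev embeddings as above). The uniform bound $\|u\|_A\geq\delta$ immediately gives that $\M$ is closed in $\HA$: any limit point of a sequence in $\M$ is either $0$ (excluded by the lower bound) or lies in $\alpha^{-1}(0)\setminus\{0\}=\M$.

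To obtain the manifold structure, I would show that $0$ is a regular value of $\alpha|_{\HA\setminus\{0\}}$; since the target is one-dimensional it suffices to verify $\alpha'(u)\neq 0$ for every $u\in\M$, which I would do exactly as in Lemma \ref{lema2.3} by computing
\begin{equation*}
\langle\alpha'(u),u\rangle=(2-q)\|u\|_A^2+(q-p)\int_{\R^N}b(x)|u|^p\,dx
\end{equation*}
and
\begin{equation*}
\langle\alpha'(u),u\rangle=(2-p)\|u\|_A^2+(p-q)\int_{\R^N}a(x)|u|^q\,dx.
\end{equation*}
Under $(D_4)$ the first formula is strictly negative (both terms are nonpositive, and $(2-q)\|u\|_A^2\leq(2-q)\delta^2<0$); under $(D_5)$ the same conclusion follows from the second formula. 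Hence $\alpha'(u)\neq 0$ on $\M$, which together with the implicit function theorem gives that $\M$ is a $C^2$ submanifold of $\HA$.

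The only step that differs from Lemma \ref{lema2.3} is the initial estimate, and here there is no real obstacle because $L^\infty$ control of $a,b$ combined with Sobolev embedding gives a cleaner bound than Hölder with $L^r,L^s$. Everything else, including the regularity-value computation and the closedness argument, is literally the previous proof, so I do not anticipate any delicate point to work around.
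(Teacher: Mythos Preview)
Your proposal is correct and follows essentially the same route as the paper: the paper replaces the H\"older/$(D_1)$ estimate by the $L^\infty$ bound $\|u\|_A^2\leq \|a\|_\infty\|u\|_q^q+\|b\|_\infty\|u\|_p^p$, uses Sobolev (via the diamagnetic inequality) and the contradiction argument to get $\|u\|_A\geq\delta$, and then explicitly states that the remainder ``follows as in Lemma~\ref{lema2.3}'', which is exactly the regular-value computation you wrote out.
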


\begin{proof}
	Let $u \in \M $ and let $a, b \in L^{\infty} (\R^N   )$. We have that
	\begin{eqnarray} \label{2.9}
	\nonumber	||u||^2_A  &=& \int_{\R^N } a(x) |u|^{q } dx + \int _{\R^N } b(x) |u|^{p}dx \\
	\nonumber &\leq & ||a||_{\infty} ||u||^q_{q} +  ||b||_{\infty} ||u||^p_{p}\\
	& \leq&  c_1 ||a||_{\infty} ||u||^q_A + c_2 ||b||_{\infty} ||u||^p_A, 
	\end{eqnarray}
	where $c_1$, $c_2$ are positive constants. Dividing (\ref{2.5}) by $ || u ||^2_A $ we get
	\begin{equation}\label{2.9.2}
	1 \leq c_1 ||a||_{\infty} ||u||^q_A + c_2 ||b||_{\infty} ||u||^p_A.
	\end{equation}
	Assume by contradiction that there exists a sequence $ \{u_n \} \in \M $ such that $ || u_n ||_A \rightarrow 0, $ as $ n \rightarrow \infty$. Then, like $ 2 <q <p $, by (\ref{2.9.2}) we get that $ 1 \leq 0 $ which is absurd. It follows that there exists $ \delta> 0 $ such that $ || u ||_A \geq \delta> 0 $, for all $u \in \M $.
	The demonstration follows as in Lemma \ref{lema2.3}. We conclude that under these conditions the Nehari manifold is a closed and $C^2$ manifold.
\end{proof}

From the results we have just done we are ready to relate the critical points of the functional restricted to the Nehari manifold with the critical points of the functional defined throughout $\HA$.

\begin{lemma}\label{lema2.5}
	Suppose that the hypotheses of Lemmas \ref{lema2.3} or \ref{lema2.4} are satisfied. Then $ u \neq 0 $ is a critical point of the functional $ I $, if and only if, it is also a critical point of $I|_{\M }$. Also, $\{u_n  \} \subset \M $ is a $(PS)_c$ sequence of $I$ if and only if it is a $ (PS)_c $ sequence for $I|_{\M }$.
\end{lemma}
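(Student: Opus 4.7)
The plan is to treat Lemma \ref{lema2.5} as a standard Lagrange multiplier argument on the Nehari manifold, exploiting the sign information on $\langle \alpha'(u),u\rangle$ already obtained in the proofs of Lemmas \ref{lema2.3} and \ref{lema2.4}. Recall $\alpha(u)=\langle I'(u),u\rangle$ and $\M=\alpha^{-1}(0)\setminus\{0\}$.

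For the critical point equivalence, the forward direction is immediate: if $u\neq0$ satisfies $I'(u)=0$, then $\langle I'(u),u\rangle=0$, so $u\in\M$, and in particular $u$ is critical for $I|_\M$. For the converse, suppose $u\in\M$ is critical for $I|_\M$. Because $\M$ is a $C^2$ manifold of codimension $1$ (Lemmas \ref{lema2.3}, \ref{lema2.4}) with defining function $\alpha$, there exists a Lagrange multiplier $\lambda\in\R$ such that $I'(u)=\lambda\alpha'(u)$. Testing against $u$ gives $0=\langle I'(u),u\rangle=\lambda\langle\alpha'(u),u\rangle$. By inequalities (\ref{2.8.i}) under $(D_4)$ and (\ref{2.8.ii}) under $(D_5)$ we have $\langle\alpha'(u),u\rangle<0$, so $\lambda=0$ and therefore $I'(u)=0$.

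For the Palais--Smale statement, again the forward direction is straightforward since the intrinsic gradient $(I|_\M)'(u_n)$ is the restriction of $I'(u_n)$ to the tangent space $T_{u_n}\M$, hence bounded in norm by $\|I'(u_n)\|$. For the converse, assume $\{u_n\}\subset\M$ with $I(u_n)\to c$ and $(I|_\M)'(u_n)\to 0$. By the Lagrange multiplier rule there exist $\lambda_n\in\R$ with $I'(u_n)-\lambda_n\alpha'(u_n)\to 0$ in $(\HA)^\ast$. First I would show $\{u_n\}$ is bounded in $\HA$: using the two reformulations of $I$ on $\M$ computed in Section~2, under $(D_4)$ we have $I(u_n)=(\tfrac12-\tfrac1q)\|u_n\|_A^2+(\tfrac1q-\tfrac1p)\int b|u_n|^p\,dx\geq(\tfrac12-\tfrac1q)\|u_n\|_A^2$, and analogously under $(D_5)$ using $a\leq 0$; combined with $I(u_n)\to c$ this gives the bound. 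Consequently $\{\alpha'(u_n)\}$ is bounded in $(\HA)^\ast$.

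Testing the approximate Lagrange identity against $u_n$ and using $\langle I'(u_n),u_n\rangle=0$ yields $\lambda_n\langle\alpha'(u_n),u_n\rangle\to 0$. The key step is to see that $\langle\alpha'(u_n),u_n\rangle$ stays bounded away from $0$: under $(D_4)$, identity (\ref{2.8}) gives
\begin{equation*}
\langle\alpha'(u_n),u_n\rangle=(2-q)\|u_n\|_A^2+(q-p)\int_{\R^N}b|u_n|^p\,dx\leq(2-q)\delta^2<0,
\end{equation*}
since both terms are nonpositive and $\|u_n\|_A\geq\delta>0$; under $(D_5)$ the analogous estimate follows from (\ref{2.8.1}). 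Hence $\lambda_n\to 0$, and from boundedness of $\alpha'(u_n)$ we get $\lambda_n\alpha'(u_n)\to 0$, so $I'(u_n)\to 0$ in $(\HA)^\ast$, as required. The main (modest) obstacle is organising the sign estimates cleanly for both hypotheses $(D_4)$ and $(D_5)$ simultaneously, but the inequalities have already been recorded in the preceding lemmas, so no new analytic input is needed.
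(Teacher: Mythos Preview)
Your proof is correct and follows the standard Lagrange-multiplier route, which differs from the paper's argument. The paper handles the critical-point converse by picking a curve $\phi$ in $\M$ with $\phi(0)=u$, $\phi'(0)=v\in T_u\M$, differentiating the identity $\langle I'(\phi(t)),\phi(t)\rangle=0$ at $t=0$, and using $\langle\alpha'(u),u\rangle<0$ to conclude; for the PS converse it appeals to the splitting $\HA=T_{u_n}\M\oplus\R u_n$ together with $\langle I'(u_n),u_n\rangle=0$. Your approach instead writes $I'(u_n)-\lambda_n\alpha'(u_n)\to 0$, proves $\{u_n\}$ is bounded directly from the Nehari identities (the paper does this separately as Lemma~\ref{lema4.1}), and then kills $\lambda_n$ via the uniform lower bound $\langle\alpha'(u_n),u_n\rangle\le (2-q)\delta^2<0$. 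This is cleaner and more self-contained: it avoids any implicit control on the angle between $T_{u_n}\M$ and $\R u_n$ that the paper's decomposition argument tacitly needs, and it makes the role of the quantitative bound $\|u\|_A\ge\delta$ explicit. The paper's approach, on the other hand, is slightly more geometric and does not require invoking the boundedness of $\{\alpha'(u_n)\}$ in the dual.
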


\begin{proof}
	If $u \neq  0$ is a critical point of $ I $, we have to $\langle I'(u),v \rangle =0 $ for all $v\in \HA$, in particular for $ u = v $ and by the definition of the Nehari manifold, we have that $ u \in \M $. Besides that, $\langle I'(u),v \rangle =0 $ for all $v\in T_u\M$, from where we conclude that $ u$ is a critical point of $I|_{\M }$.
	
	On the other hand, let $u \in \M $ be a critical point of $I|_{\M }$. We already know that $\langle I'(u), tu \rangle =t \langle I'(u), u \rangle=0 $ for all $t \in \R$, then to ensure that $ u$ is the critical point of $ I $ in $ \HA $ it is necessary to show that $\langle I'(u), v \rangle=0$ for all $v$ out of $ \R_u $ (space generated by $ u $). This is the same as showing that $T_u\M \perp \R_u$. For this, consider $v\in T_u\M$, then there is a way $\phi: [0,1] \subset \R \rightarrow \M$ such that $\phi(0)=u$ and $\phi'(0)=v.$ Note that
	\begin{eqnarray*}
		\langle I'(\phi(t)),\phi(t)\rangle= I''(\phi(t))\phi (t)\phi'(t)+ I'(\phi(t))\phi'(t).
	\end{eqnarray*}
	As $\phi(0)=u \in \M$, we have $\langle I'(\phi(0)),\phi(0)\rangle=0$. So by making $ t = 0 $ and multiplying the above equality by $ u $ we obtain
	\begin{eqnarray*}
		I''(\phi(0))\phi(0)\phi'(0)u+ I'(\phi(0))\phi'(0)u=0.
	\end{eqnarray*}
	Replacing $\phi(0)=u$ and $\phi'(0)=v$, we have
	\begin{eqnarray*}
		\langle I''(u),u \rangle \langle v, u \rangle+  \langle I'(u),u\rangle v =0.
	\end{eqnarray*} 
	See that $\langle I'(u),u\rangle =0$ and by (\ref{2.8}), we have $\langle\alpha'(u), u\rangle < 0$, then $\langle v,u \rangle=0$ for all $v\in T_u\M$, thus concluding the first part of the lemma.
	
	To show the second sentence of this lemma, consider $ \{u_n \} \subset \M $ a $(PS)_c$ sequence of $I$, that is, $I(u_n)=c$
	and 
	\begin{equation}\label{norma}
	||I'(u_n)||=\sup_{v\in H_A^1; ||v||_A=1} \langle I'(u_n),v \rangle \rightarrow 0 .
	\end{equation}
	First, note that $ I (u_n) = c $. In addition, we have
	\begin{equation}
	||I'|_{\M}(u_n)||=\sup_{w\in T_{u_n}\M; ||w||_A=1} \langle I'(u_n),w \rangle  ,
	\end{equation}
	as \cite[Definition 5.10]{willen}, which also goes to zero, since it is a particular case of (\ref{norma}), which concludes the outgoing.
	
	To show the other side of the statement consider $\{u_n  \} \subset \M $ a $(PS)_c$ sequence for $I|_{\M }$, that is, $$I|_{\M}(u_n)=c$$
	and 
	\begin{equation}
	||I'|_{\M}(u_n)||=\sup_{w\in T_{u_n}\M; ||w||_A=1} \langle I'(u_n),w \rangle \rightarrow 0 .
	\end{equation}
	In the same way as above we have $I'(u_n)=I(u_n) =c$. In addition, we have seen that $\HA=T_{u_n}\M \oplus \R_{u_n}$, so every $v\in \HA$ such that $||v||_A=1$ can be written as  $v=w+z$ with $w\in T_{u_n}\M$ and $z\in \R_{u_n} $. In this way, we have
	\begin{eqnarray}
	\nonumber	||I'(u_n)||&=&\sup_{v\in H^1_A; ||v||_A=1} \langle I'(u_n),v \rangle \\
	\label{soma} 	&=&\sup_{ ||w||_A=1} \langle I'(u_n),w \rangle + \sup_{ ||z||_A=1} \langle I'(u_n),z\rangle.
	\end{eqnarray}
	The first term of (\ref{soma}) converges to zero by hypothesis and the second term converges to zero by the same argument used in the first part of that demonstration.
\end{proof}

\section{ Preliminaries of Theorem \ref{teo2.1}}

To prove the theorem \ref{teo2.1} we need some auxiliary results. In order to facilitate the notation we will define the following functional
\begin{equation}\label{funcionalA}
A(u)=\int_{\R^N}a   (x)|u|^p dx
\end{equation}
and
\begin{equation}\label{funcionalB}
B(u)=\int_{\R^N}b   (x)|u|^q dx.
\end{equation}
In addition we will need the following definitions.

\begin{definition} 
We say that the functional $ F $ is weakly continuous when $F(u_n  ) \rightarrow  F(u)$ whenever $u_n \rightharpoonup u$, as $n \rightarrow \infty.$
\end{definition}

\begin{definition} 
	We say that the functional $ F': X \rightarrow X^* $ is completely continuous when $F'(u_n  ) \rightarrow  F'(u)$ whenever $u_n \rightharpoonup u$, as $n \rightarrow \infty.$
\end{definition}

\begin{lemma}\label{lema3.1}
	Suppose the hypothesis $ (D_1) $ is satisfied. So, $A' , B ' : \HA \rightarrow  \HA^*$ are completely continuous.
\end{lemma}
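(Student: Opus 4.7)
The plan is to compute the Fr\'echet derivative of $A$ explicitly, then bound the dual norm of $A'(u_n)-A'(u)$ by combining H\"older's inequality, the Sobolev embedding, and a truncation argument that exploits $a\in L^r$ to reduce the estimate to a ball where the embedding becomes compact.

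The derivative is $\langle A'(u),v\rangle = p\,\mathrm{Re}\!\int_{\R^N} a(x)|u|^{p-2}u\,\bar v\,dx$, and analogously for $B'$. Setting $r'=r/(r-1)$, hypothesis $(D_1)$ gives $pr'<2^*$, so by the diamagnetic inequality $|\nabla|u||\le|\nabla_A u|$ combined with the ordinary Sobolev embedding applied to $|u|$, one has $\HA\hookrightarrow L^{pr'}(\R^N)$ continuously; moreover, this embedding is compact when restricted to any ball $B_R$, so from $u_n\rightharpoonup u$ in $\HA$ one extracts (up to a subsequence) $u_n\to u$ in $L^{pr'}(B_R)$ for every $R$.

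For the main estimate, fix $\varepsilon>0$ and pick $R$ with $\|a\|_{L^r(\R^N\setminus B_R)}<\varepsilon$ (possible since $a\in L^r$). Applying H\"older with the triple $(r,\ pr'/(p-1),\ pr')$, whose reciprocals sum to $1$, to the three factors $a$, $|u_n|^{p-2}u_n-|u|^{p-2}u$ and $v$, and splitting the integration over $B_R$ and its complement, one obtains for $\|v\|_A\le 1$
\begin{align*}
|\langle A'(u_n)-A'(u),v\rangle|\le{}& p\,\|a\|_{L^r(B_R)}\,\bigl\||u_n|^{p-2}u_n-|u|^{p-2}u\bigr\|_{L^{pr'/(p-1)}(B_R)}\,C\\
&{}+p\,\varepsilon\,\bigl(\|u_n\|_{pr'}^{p-1}+\|u\|_{pr'}^{p-1}\bigr)\,C,
\end{align*}
where $C$ bounds $\|v\|_{pr'}$ via the Sobolev embedding. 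The first summand vanishes as $n\to\infty$ by the continuity of the Nemytskii map $z\mapsto|z|^{p-2}z$ from $L^{pr'}(B_R)$ into $L^{pr'/(p-1)}(B_R)$ together with the local strong convergence $u_n\to u$ in $L^{pr'}(B_R)$; the second is uniformly controlled by a constant multiple of $\varepsilon$ using the boundedness of $\{u_n\}$ in $L^{pr'}(\R^N)$. Letting $n\to\infty$ and then $\varepsilon\to 0$ gives $\|A'(u_n)-A'(u)\|_{(\HA)^*}\to 0$. The identical scheme, with $(a,r,p)$ replaced by $(b,s,q)$ and the condition $qs'<2^*$ from $(D_1)$ playing the analogous role, handles $B'$.

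The main obstacle is the lack of global compactness of $\HA\hookrightarrow L^{pr'}(\R^N)$ on the unbounded domain $\R^N$; hypothesis $(D_1)$, which places $a$ and $b$ in $L^r$ and $L^s$ respectively, is what makes the tail estimate outside $B_R$ small and thereby reduces the full problem to the local compact embedding.
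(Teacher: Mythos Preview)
Your argument is correct and is a standard route, but it differs in structure from the paper's. The paper does not truncate in space: after setting $v_n:=|u_n|^{p-2}u_n-|u|^{p-2}u$ it observes that $|v_n|^{p/(p-1)}$ is bounded in $L^{r'}(\R^N)$ and tends to $0$ a.e., hence $|v_n|^{p/(p-1)}\rightharpoonup 0$ weakly in $L^{r'}(\R^N)$; a single H\"older estimate of the form
\[
|\langle A'(u_n)-A'(u),w\rangle|\le C\,\|a\|_r^{1/p}\,\|w\|_{pr'}\Bigl(\int_{\R^N}|a|\,|v_n|^{p/(p-1)}\,dx\Bigr)^{(p-1)/p}
\]
then finishes, since the last factor is the pairing of $|a|\in L^r$ with a sequence weakly null in $L^{r'}$. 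Your $B_R$/complement decomposition instead makes the role of $a\in L^r$ visible through the tail smallness $\|a\|_{L^r(\R^N\setminus B_R)}<\varepsilon$ and reduces everything to the continuity of the Nemytskii map on a bounded domain; this is more elementary (it avoids the ``bounded plus a.e.\ implies weak convergence'' lemma) but requires a diagonal extraction so that the local strong convergence holds simultaneously for all $R$, and the routine subsequence-of-subsequence argument to recover convergence of the full sequence. Both proofs rely on the same local compactness step (passing from $u_n\rightharpoonup u$ in $\HA$ to a.e.\ or local $L^{pr'}$ convergence of the complex-valued $u_n$), which neither you nor the paper justifies in detail for $A\in L^2_{\mathrm{loc}}$; this is the one point that would deserve a reference or a line of explanation.
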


\begin{proof}
	We will begin by proving that $ A '$ is completely continuous. Let $ u_n  \in \HA$ be with $u_n \rightharpoonup u$. 
Being $ \{u_n \} $ bounded in $ \HA $, using the diamagnetic inequality we obtain
	\begin{equation} 
	\int|\nabla|u_n||^2\leq \int |\nabla_A u_n|^2<C\;\;
	\end{equation}
	for all $n\in \R$, whence $\{|u_n|\}$ is bounded in $H^1.$
	Turning to a subsequence if necessary, by Rellich-Kondrachov's theorem \cite[Theorem 8.16]{kavian} we have
	\begin{eqnarray}\label{3.1a}
	|u_n| &\rightharpoonup& u \mbox{ in } H^1(\R^N);\\
	\label{3.2}		|u_n| &\rightarrow & u \mbox{ in } L^l_{loc}(\R^N) \mbox{ for all }  2\leq l <2^* ;\\
	\label{3.3}			|u_n| &\rightarrow& u \mbox{ a.e. }  \R^N;
	\end{eqnarray}
	Choose $ v_n := |u_n  |^{p- 2} u_n  -  |u|^{p- 2} u$. By (\ref{3.3})
	\begin{equation}\label{zero}
	v_n \rightarrow 0 \;\mbox{ a.e. in } \R^N.
	\end{equation} 
	Knowing that $|a+b|^t \leq 2^{t-1}(a^t+b^t)$ for $t>1$ we have,
	\begin{eqnarray}
	\nonumber |v_n|^{\frac{p}{p-1}} &=&||u_n  |^{p- 2} u_n  -  |u|^{p- 2} u |^{\frac{p}{p-1}}  \\
	\nonumber  &\leq & 2^{\frac{p}{p-1}-1}[(|u_n  |^{p- 1}  )^{\frac{p}{p-1}}  -  (|u|^{p- 1})^{ \frac{p}{p-1}}]\\
	\label{limt}  &=& C(|u_n|^p-|u|^p).
	\end{eqnarray}
	Note that $ u \in \HA $, then $|u| \in H^1$, where
	\begin{equation}\label{finita}
	\left( \int(|u|^p)^{\frac{r}{r-1} }\right)^{\frac{r-1}{r}}= ||u||^p_{\frac{pr}{r-1}}< \infty,
	\end{equation}
	since by ($ D_4 $) we have $ \frac{pr}{r-1}<2 ^ * $. The same goes for every $ u_n $ of the given sequence.
	Hence, for (\ref{limt}) and (\ref{finita}) we have $|v_n|^{\frac{p}{p-1}} \in L^{\frac{r}{r-1}}$. 
	By boudedness of $\{u_n\}$ 
	in $ \HA$, by (\ref{finita}) and (\ref{zero}) we have $\{ |v_n|^{\frac{p}{p-1}} \}$ is bounded in $L^{\frac{r}{r-1}}(\R^N).$ This gives us that there is a subsequence such that
	\begin{equation}\label{3.6}
	|v_n|^{\frac{p}{p-1}}\rightharpoonup 0 \;\; \mbox{ in } L^{\frac{r}{r-1}}.
	\end{equation}
	Now, take $w\in \HA$ such that $||w||_A<1.$ Due to the inequalities H\"{o}lder and Sobolev we have
	\begin{eqnarray}
	\nonumber |\langle   A'(u_n) - A'(u), w \rangle |  & = &  \left| \int_{\R^N }a(x)v_n w dx \right| \leq \int_{\R^N } |a(x)|^\frac{1}{p}|w||a(x)|^\frac{1}{p'} v_n  dx        \\
	\nonumber   & \leq & \left(  \int_{\R^N } |a(x)| |w|^p dx \right)^\frac{1}{p} \left(  \int_{\R^N } |a(x)| |v_n|^{p'} dx \right)^\frac{1}{p'} \\ 
	\nonumber   & \leq &    ||a||_r^{\frac{1}{p}}     \left(  \int_{\R^N } |w|^{pr'} dx \right)^\frac{1}{pr'} \left(  \int_{\R^N } |a(x)| |v_n|^{p'} dx \right)^\frac{1}{p'}      \\
	\label{right}   & \leq & C ||a||_r^{\frac{1}{p}}     ||w||_{pr'}  \left(  \int_{\R^N } |a(x)| |v_n|^{p'} dx \right)^\frac{1}{p'},       
	\end{eqnarray}
	with $C>0$ constant. Then $a \in L^r=( L^{r'})^*$, with this and by (\ref{3.6}) we have (\ref{right}) goes to zero uniformly with respect to $||w||_A\leq 1$. Which proves that $ A '$ is completely continuous. For $ B '$ the proof is analogous.
\end{proof}





Assuming that the hypothesis $ (D_2) $ is satisfied, we are interested in showing that the functional satisfies the $ (PS)_c $ condition in Nehari manifold, for all $ c \in \R.$ Since the weight functions can change signal, we will separate the functional $ A $ and $ B $ previously defined in their positive and negative parts in order to show that the positive part 
of its derivatives are completely continuous.
In this way, we will make the following definitions.
\begin{equation}\label{3.7}
a^-  (x) := \max\{0, - a(x)\},  \;\;a^+ (x) := \max\{0, a(x)\},
\end{equation}
and we define $b^{\pm}  (x)$ similarly. Still, 
\begin{equation}\label{}
A_{\pm}  (u) := \int_{\R^N}a^{\pm}  (x)|u|^q dx, \;\;  B_{\pm}  (u) := \int_{\R^N} b^{\pm} (x)|u|^p dx.
\end{equation}
Then, we present the following result.

\begin{lemma}\label{lema3.2}
	Suppose the hypothesis $(D_2 )$ is satisfied. Then $A'_+ , B'_+ :\HA \rightarrow  \HA^*$ are completely continuous.
\end{lemma}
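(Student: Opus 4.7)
The plan is to mirror the argument of Lemma~\ref{lema3.1} but to replace the use of global integrability by the decay of $a^+, b^+$ at infinity provided by $(D_2)$. Let $u_n \rightharpoonup u$ in $\HA$; I must show
\[
\sup_{\|w\|_A\leq 1}|\langle A'_+(u_n) - A'_+(u), w\rangle| \longrightarrow 0
\]
(and analogously for $B'_+$). Computing the Fr\'echet derivative gives $\langle A'_+(u), w\rangle = p\int_{\R^N} a^+(x) |u|^{p-2}\mathrm{Re}(u\bar w)\,dx$, so after the triangle inequality the quantity to control is
\[
\int_{\R^N} a^+(x)\,|V_n(x)|\,|w(x)|\,dx,\qquad V_n := |u_n|^{p-2}u_n - |u|^{p-2}u.
\]

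First I collect uniform bounds: by weak convergence $\{u_n\}$ is bounded in $\HA$, and the diamagnetic inequality combined with Sobolev embedding ($p<2^*$) yields $\{|u_n|\}$ bounded in $L^p(\R^N)$. Consequently $\{V_n\}$ is bounded in $L^{p'}(\R^N)$ with $p'=p/(p-1)$, and $\|w\|_p \leq C\|w\|_A \leq C$ for every admissible $w$. Given $\varepsilon>0$, hypothesis $(D_2)$ furnishes $R=R(\varepsilon)$ with $a^+(x)<\varepsilon$ for $|x|>R$. H\"{o}lder's inequality on the tail yields
\[
\int_{|x|>R} a^+\,|V_n|\,|w|\,dx \leq \varepsilon\,\|V_n\|_{p'}\,\|w\|_p \leq C\varepsilon,
\]
uniformly in $n$ and in $w$ with $\|w\|_A \leq 1$.

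On the ball $B_R$, I invoke the local compact embedding $\HA \hookrightarrow L^p(B_R,\C)$ valid on bounded domains, which after extraction of a subsequence gives $u_n \to u$ strongly in $L^p(B_R,\C)$. Standard Nemytskii continuity of $u\mapsto |u|^{p-2}u$ from $L^p$ to $L^{p'}$ then forces $V_n \to 0$ in $L^{p'}(B_R)$, and H\"{o}lder gives
\[
\int_{B_R} a^+\,|V_n|\,|w|\,dx \leq \|a^+\|_\infty\,\|V_n\|_{L^{p'}(B_R)}\,\|w\|_p = o_n(1),
\]
uniformly in $w$. Adding the two estimates and sending first $n\to\infty$ and then $\varepsilon\to 0$ yields complete continuity of $A'_+$; the usual subsequence-of-subsequence argument upgrades this to convergence of the entire sequence, as demanded by the definition. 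The proof for $B'_+$ is verbatim with exponent $q$ in place of $p$.

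The main obstacle is the loss of global Sobolev compactness on $\R^N$: whereas in Lemma~\ref{lema3.1} the $L^r$-integrability of $a$ from $(D_1)$ enabled a direct weak-convergence argument via an integrable majorant, here the only structural information on $a^+$ is its pointwise vanishing at infinity. The remedy is to partition $\R^N$ so that the vanishing at infinity controls the tail while Rellich--Kondrachov handles the bounded part; a subtlety to handle carefully is that one only has compactness of $|u_n|$ in $H^1$ from the diamagnetic inequality, so the extraction of a locally strongly convergent subsequence of $u_n$ itself must rely on the bounded-domain compactness of the magnetic space rather than on the diamagnetic inequality alone.
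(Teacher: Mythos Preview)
Your proof is correct and follows essentially the same strategy as the paper: split $\R^N$ into a large ball and its complement, use $(D_2)$ to make the tail integral smaller than $C\varepsilon$ uniformly, and use local compactness on the ball to show the inner integral is $o_n(1)$. You are in fact slightly more careful than the paper about one point: the paper extracts strong local convergence only for $|u_n|$ via the diamagnetic inequality but then silently uses $u_n\to u$ in $L^p_{loc}$ for the complex-valued Nemytskii map, whereas you explicitly invoke the compact embedding $\HA\hookrightarrow L^p(B_R,\C)$ on bounded domains to justify this step.
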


\begin{proof}
	First we show that $ A '_ + $ is completely continuous. Be $\{u_n\}\in \HA$ and $u_n \rightharpoonup u_0$ in $\HA$. If we need a subsequence and use the same argument of Lemma expression (\ref {lema3.1}) we get (\ref {3.1a}) - (\ref {3.3}). As we have done before, choose $ v_n := |u_n  |^{p- 2} u_n  -  |u|^{p- 2} u$. As $u_n \rightarrow u$ in $L^p_{loc}(\R^N)$, by result \cite[Theorem A.2]{willen} 
	and by (\ref{3.3})
	\begin{equation}\label{3.8}
	v_n\rightarrow 0 \mbox{  in } L^{\frac{p}{p-1}}_{loc}\R^N.
	\end{equation}
	By the hypothesis ($D_2$) for all $\varepsilon >0$ there will be an $ R> 0 $ such that
	\begin{equation}\label{3.9}
	a^+(x)<\varepsilon, \;\; \mbox{ whenever } |x|>R.
	\end{equation}
	Using the inequalities of H\"{o}lder and Sobolev and by (\ref {3.8}) we obtain
	\begin{eqnarray}
	\label{3.10}\sup_{||w||\leq 1} \left|  \int_{|x|\leq R}a^+ (x)v_n w dx\right|& \leq & ||a^+||_{\infty}\left( \int_{|x|\leq R}  |v_n |^{\frac{p}{p- 1}}\right)^{\frac{p- 1}{ p}}\left( \int_{ |x|\leq R}  |w|^p \right)^{\frac{1}{p}} \\
	\nonumber  &\leq& C_1\left(  \int_{|x|\leq R} |v_n |^{\frac{ p}{p- 1}}\right)^{\frac{p- 1}{ p}}\;\;  \rightarrow\; 0,\end{eqnarray}
	as $  n \rightarrow  \infty.$ As seen in the previous lemma, $ \{v_n \} $ is limited in $L^{\frac{ p}{p- 1}}(\R^N)$. Using this fact, the inequalities of H\"{o} lder and Sobolev and by (\ref {3.9}) we obtain that there exists a constant $ C_2> 0 $ independent of $ \varepsilon  > 0$ such that 
	\begin{equation}
	\label{3.11}\sup_{||w||\leq 1} \left|  \int_{|x|\leq R}a^+ (x)v_n w dx\right| \leq  C_2 \varepsilon .
	\end{equation} 
	Using (\ref{3.10}) and (\ref{3.11}), we have
	\begin{equation*}
	\sup_{||w||\leq 1}|\langle A'_+(u_n) -A'_+(u),w \rangle |= \sup_{||w||\leq 1} \left|  \int_{|x|>R}a^+ (x)v_n w dx\right| \;\;  \rightarrow\; 0,
	\end{equation*}
	as $  n \rightarrow  \infty,$ from which we conclude that $A'_+$ is completely continuous. For $ B '_ + $ the argument is analogous.
	
\end{proof}






\begin{lemma}\label{lema4.1}
	Suppose the hypotheses $ (D_4) $ or $ (D_5) $ are satisfied. Then, every $ (PS)_c $ sequence $ \{u_n \} \subset \M $ is bounded.
\end{lemma}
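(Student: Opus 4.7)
The plan is to exploit the two identities for $I|_\M$ already established in Corollary \ref{cor2.3}. Since the sequence lies in $\M$, we have $\langle I'(u_n),u_n\rangle=0$ identically, so the $(PS)_c$ assumption reduces to $I(u_n)\to c$; in particular $\{I(u_n)\}$ is bounded in $\R$. I would then split the argument according to which of the two weight hypotheses holds.

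Under $(D_4)$ I would insert $u=u_n$ into the first representation
\[
I(u_n) = \left(\tfrac{1}{2}-\tfrac{1}{q}\right)\|u_n\|_A^2 - \left(\tfrac{1}{p}-\tfrac{1}{q}\right)\!\int_{\R^N}b(x)|u_n|^p\,dx.
\]
Because $b\geq 0$ pointwise and $\tfrac{1}{p}-\tfrac{1}{q}<0$ (as $q<p$), the subtracted term is non-positive, whence $I(u_n)\geq(\tfrac{1}{2}-\tfrac{1}{q})\|u_n\|_A^2$. Since $\tfrac{1}{2}-\tfrac{1}{q}>0$, boundedness of $I(u_n)$ immediately yields boundedness of $\|u_n\|_A$.

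Under $(D_5)$ I would use instead the dual identity
\[
I(u_n) = \left(\tfrac{1}{2}-\tfrac{1}{p}\right)\|u_n\|_A^2 - \left(\tfrac{1}{q}-\tfrac{1}{p}\right)\!\int_{\R^N}a(x)|u_n|^q\,dx,
\]
and now $a\leq 0$ together with $\tfrac{1}{q}-\tfrac{1}{p}>0$ makes the subtracted term non-positive, giving $I(u_n)\geq(\tfrac{1}{2}-\tfrac{1}{p})\|u_n\|_A^2$, and hence the desired uniform bound.

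I do not expect any serious obstacle: the two representations of $I|_\M$ are tailored so that in each case the potentially sign-changing integral drops out and what remains has the correct sign. In fact the full strength $I'(u_n)\to 0$ in $\HA^*$ is not needed at all—the Nehari constraint alone, combined with boundedness of $I(u_n)$, already forces $\{u_n\}$ to be bounded in $\HA$.
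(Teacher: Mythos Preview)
Your proposal is correct and follows essentially the same route as the paper: use the Nehari identity to rewrite $I(u_n)$ so that, under each of $(D_4)$ or $(D_5)$, the weighted integral term has the right sign and can be dropped, leaving a coercive lower bound $I(u_n)\ge C\|u_n\|_A^2$ with $C>0$. Your closing remark that only the boundedness of $\{I(u_n)\}$ is needed (not $I'(u_n)\to 0$) is also accurate and is implicit in the paper's argument.
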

\begin{proof}
	Let $c \in \R$ and let $\{u_n  \} \subset \M$ a $(PS)_c$ sequence. Then
	\begin{equation}\label{3.12}
	||u_n||^2 = A(u_n  ) + B(u_n  ),
	\end{equation}
	and $I' (u_n  ) \rightarrow  0$, $I(u_n  ) \rightarrow  c$. 
	If  $(D_4 )$  is sitisfied, then $B(u_n  ) \geq 0$ and by (\ref{3.12}) and by the limitation of $I(u_n)$,
	\begin{eqnarray}\label{(3.13)}
	I(u_n  ) &=&   \frac{1}{2} ||u_n||^2_A - \frac{1}{p} A(u_n  ) - \frac{1}{q} B(u_n  )    \\
	\nonumber       &=&  \left( \frac{1}{2}-\frac{1}{p} \right)||u_n||^2_A +  \left( \frac{1}{p}-\frac{1}{q} \right) B(u_n  ) \geq \left( \frac{1}{2}-\frac{1}{p} \right)||u_n||^2_A,
	\end{eqnarray}
	for all $ n $ large enough. 
	Also, if $ (D_5) $ is satisfied, then $ A (u_n) \leq 0 $ and by (\ref {3.12}) again we get
	\begin{eqnarray}\label{(3.14)} 
	I(u_n  ) &=&   \frac{1}{2} ||u_n||^2_A - \frac{1}{p} A(u_n  ) - \frac{1}{q} B(u_n  )    \\
	\nonumber      &=&  \left( \frac{1}{2}-\frac{1}{p} \right)||u_n||^2_A  - \left( \frac{1}{p}-\frac{1}{q} \right) A(u_n  ) \geq \left( \frac{1}{2}-\frac{1}{p} \right)||u_n||^2_A,
	\end{eqnarray}
	for all $n$ large enough. As $ I (u_n) \rightarrow c $, we have that in the two cases $ \{u_n \} $ is a bounded sequence. As we wanted to demonstrate.
\end{proof}

Now we are ready to show that $ (PS) _c $ condition is satisfied by the functional $I$ in $\M $ for all $c \in  \R.$

\begin{prop}\label{prop3.3}
	Supose $(D_1 )$ or $(D_2 )$ and $(D_4 )$ or $(D_5 )$ are satisfied. Then, the functional $ I $ satisfies $(PS)_c$ condition in $\M $ for all $c \in  \R.$
\end{prop}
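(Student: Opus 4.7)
The plan is to start with a $(PS)_c$ sequence $\{u_n\}\subset\M$ and upgrade weak subsequential convergence to strong convergence in $\HA$. By Lemma \ref{lema2.5} the sequence is also a $(PS)_c$ sequence for the unrestricted functional $I$ on $\HA$, and Lemma \ref{lema4.1} delivers boundedness under either of the sign hypotheses $(D_4)$ or $(D_5)$. Hence, passing to a subsequence, $u_n\rightharpoonup u$ in $\HA$. Once strong convergence $u_n\to u$ is established, continuity of $I'$ gives $I'(u)=0$, and the uniform lower bound $\|u_n\|_A\geq\delta>0$ (Lemmas \ref{lema2.3}, \ref{lema2.4}) passes to the limit, so $u\neq 0$ and $u\in\M$.

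Under hypothesis $(D_1)$ the argument is direct. Writing $I'(v)=Rv-A'(v)-B'(v)$, where $R:\HA\to\HA^*$ is the isometric Riesz identification induced by the inner product giving $\|\cdot\|_A$, Lemma \ref{lema3.1} gives $A'(u_n)\to A'(u)$ and $B'(u_n)\to B'(u)$ strongly in $\HA^*$. Combined with $I'(u_n)\to 0$, this forces $Ru_n$ to converge strongly in $\HA^*$; inverting the isometry yields $u_n\to u$ in $\HA$.

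Under hypothesis $(D_2)$ only the positive parts $A'_+,B'_+$ are completely continuous (Lemma \ref{lema3.2}), so the previous inversion fails. My plan is to split the functional as
\[
I=\tilde J-\Psi,\qquad \tilde J(u):=\tfrac12\|u\|_A^2+\tfrac1q\!\int_{\R^N}\!a^-(x)|u|^q\,dx+\tfrac1p\!\int_{\R^N}\!b^-(x)|u|^p\,dx,
\]
with $\Psi:=\tfrac1q A_+ +\tfrac1p B_+$. Since $\zeta\mapsto|\zeta|^q$ and $\zeta\mapsto|\zeta|^p$ are convex on $\C$ and $a^-,b^-\geq 0$, the functional $\tilde J$ is strictly convex, so its derivative satisfies the monotonicity estimate
\[
\bigl\langle \tilde J'(u_n)-\tilde J'(u),\,u_n-u\bigr\rangle \;\geq\; \|u_n-u\|_A^2.
\]
I would then rewrite $\tilde J'(u_n)-\tilde J'(u)=I'(u_n)+\Psi'(u_n)-\tilde J'(u)$ and pair with $u_n-u$: the contribution of $I'(u_n)$ vanishes because $\|I'(u_n)\|_{\HA^*}\to 0$ and $\{u_n-u\}$ is bounded; the contribution of $\Psi'(u_n)$ vanishes since $\Psi'(u_n)\to\Psi'(u)$ strongly (Lemma \ref{lema3.2}) while $u_n-u\rightharpoonup 0$; and the last term vanishes by weak convergence of $u_n-u$ against the fixed element $\tilde J'(u)\in\HA^*$. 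Consequently the left-hand side tends to $0$, which by the monotonicity estimate forces $\|u_n-u\|_A\to 0$.

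The main obstacle is precisely the $(D_2)$ case: because $a^-,b^-$ need only lie in $L^\infty(\R^N)$ without any decay at infinity, their contributions to $I'$ are not completely continuous, and purely compactness-based arguments break down. The convex splitting $I=\tilde J-\Psi$ is exactly what resolves this, packaging the non-compact part of $I'$ inside a monotone operator so that strong convergence can be recovered from a single pairing with $u_n-u$.
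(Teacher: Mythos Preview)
Your proposal is correct and follows essentially the same route as the paper: boundedness from Lemma~\ref{lema4.1}, weak convergence of a subsequence, and then strong convergence obtained by pairing with $u_n-u$ and exploiting complete continuity of the nonlinear terms under $(D_1)$, or of their positive parts under $(D_2)$ together with the monotonicity coming from the negative-weight pieces. The only cosmetic differences are that under $(D_1)$ you invert the Riesz map instead of testing $I'(u_n)-I'(u)$ against $u_n-u$, and under $(D_2)$ you package the paper's sign argument---dropping the nonnegative contributions $\langle A'_-(u_n)-A'_-(u),u_n-u\rangle$, $\langle B'_-(u_n)-B'_-(u),u_n-u\rangle$---as strict convexity of $\tilde J$; both are the same computation in different notation.
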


\begin{proof}	Let $c \in \R$ and let $\{u_n  \} \subset \M$ a $(PS)_c$ sequence. Since we are under the assumptions $ (D_4) $ or $ (D_5) $, by the Lemma \ref{lema4.1}, $ \{u_n \} $ is a bounded sequence. Thus, there is $ u \in \HA $ such that, passing to a subsequence if necessary, $ u_n \rightharpoonup u $. Thus, by having $ I '(u_n)\rightarrow 0 $, we obtain $ I' (u) = 0 $. Thereby,
	\begin{equation}\label{3.15}
	\langle I' (u_n  ) -  I' (u), u_n  -  u\rangle =||u_n  -  u||^2 -  \langle A' (u_n  ) -  A' (u), u_n  -  u\rangle  -  \langle B'(u_n  ) -  B'(u), u_n -  u\rangle  \rightarrow  0.
	\end{equation}
	Now, if $(D_1 )$ is satisfied, then by Lemma \ref{lema3.1}, $A' (u_n  ) \rightarrow  A' (u)$ and $B ' (u_n  ) \rightarrow  B'(u)$. Thus, by (\ref{3.15}) we obtain $u_n \rightarrow  u \in X$.
Suppose now that $ (D_2) $ is satisfied. Using the fact that the function  $v \mapsto  |v|^t$ is convex to $ t \geq 2 $ (in particular for $t = p$ and $q$), we get $(|v|^{t- 2} v -  |u|^{t- 2} u)(v -  u) \geq 0$.
	With this, by (\ref{3.15}),
	\begin{eqnarray*}
		||u_n  -  u||^2_A   &-& \langle A'_+ (u_n  ) -  A'_+ (u), u_n  -  u\rangle  -  \langle B'_+(u_n  ) -  B'_+(u), u_n -  u\rangle   \\
		&\leq&  \langle A'(u_n  ) -  A'(u), u_n  -  u\rangle  -  \langle B'(u_n  ) -  B'(u), u_n -  u\rangle  \rightarrow  0.
	\end{eqnarray*}
	By Lemma \ref{lema3.2}, $A'_+ (u_n  ) \rightarrow  A'_+ (u)$ and $B'_+  (u_n  ) \rightarrow  B'_+ (u)$, then $u_n \rightarrow  u $ also in this case, which concludes the proof of the proposition.
\end{proof}

To prove the Theorem \ref{teo2.1} we need to make some considerations.

\begin{definition} The set $ K \subset \HA $ is symmetric if $K = - K$. 
\end{definition}

\begin{definition} 
	Let
	$$\Sigma := \{K \subset X : K \mbox{ is closed and symmetric}\}.$$
	For $K \neq  \emptyset$ and $K \in \Sigma$, the Krasnoselskii genus of $ K $ is the smallest integer $ n $ such that there is an odd function $f \in  C(K, \R^n \setminus \{0\})$. 
	
	The $ K $ genus is denoted by $ \gamma (K) $. If there is no $ f $ that satisfies the above properties for any $ n $, then $ \gamma (K): = \infty $. Note also that $\gamma (\emptyset) := 0$.
\end{definition}

\begin{theorem}\label{teo3.4}
	\cite[Theorema II.5.7]{struwe}. Assume that $J \in C^1 (M )$ be a functional even in a manifold $C^{1,1}$, complete and symmetric $ M \subset V \setminus \{0\}$ in a Banach space $V$. Supose that $J$ satisfies the $(PS)_c$ condition for all $c \in \R$ and is bounded below in $ M $. Consider
	$$\hat{\gamma} (M ) := \sup\{\gamma (K ) : K \subset M \mbox{ is compact and simetric}\}.$$
	Then, the functional $ J $ has at least $ \hat{\gamma} (M) \leq \infty $ critical point pairs.
\end{theorem}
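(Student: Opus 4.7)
The plan is to obtain the infinite list of critical pairs of $J$ on $M$ by a Ljusternik--Schnirelmann type minimax scheme in which the topological index is the Krasnoselskii genus $\gamma$. For each integer $k$ with $1 \le k \le \hat{\gamma}(M)$, I would set
\[
\Sigma_k := \{K \subset M : K \text{ compact, symmetric, } \gamma(K) \ge k\}
\]
and define
\[
c_k := \inf_{K \in \Sigma_k}\, \sup_{u \in K} J(u).
\]
Boundedness of $J$ from below on $M$ yields $c_k > -\infty$, and the inclusion $\Sigma_{k+1} \subset \Sigma_k$ gives $c_1 \le c_2 \le \cdots$. By the very definition of $\hat{\gamma}(M)$ each $\Sigma_k$ with $k \le \hat{\gamma}(M)$ is non-empty, so the sequence is well defined up to index $\hat{\gamma}(M)$.

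The objective is then to prove that each such $c_k$ is a critical value of $J$, and more strongly that whenever $c = c_k = c_{k+1} = \cdots = c_{k+j-1}$ the critical set
\[
K_c := \{u \in M : J(u) = c,\; (J|_{M})'(u) = 0\}
\]
satisfies $\gamma(K_c) \ge j$. Since a compact symmetric set of genus at least $2$ is necessarily uncountable, this dichotomy --- either the $c_k$ are strictly increasing, or some level $c$ already carries infinitely many critical points --- immediately produces at least $\hat{\gamma}(M)$ critical pairs of $J$ on $M$.

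The technical heart is an equivariant deformation lemma on $M$. Assuming by contradiction that $\gamma(K_c) \le j-1$, I would build an odd, locally Lipschitz pseudo-gradient vector field for $J|_{M}$ by averaging a local pseudo-gradient with its antipode and projecting onto the tangent bundle of $M$. Integrating this field yields an odd flow $\eta_\tau : M \to M$; the $(PS)_c$ condition combined with the boundedness below of $J$ then supplies $\varepsilon > 0$ and a symmetric open neighborhood $N$ of $K_c$ with $\gamma(\overline{N}) = \gamma(K_c)$ such that
\[
\eta_1\bigl(\{J \le c+\varepsilon\} \setminus N\bigr) \subset \{J \le c-\varepsilon\}.
\]
Choosing $K \in \Sigma_{k+j-1}$ with $\sup_K J \le c+\varepsilon$ and using the subadditivity $\gamma(\overline{K \setminus N}) \ge \gamma(K) - \gamma(\overline{N}) \ge k$ together with the fact that odd continuous maps do not decrease genus, one finds $\eta_1(\overline{K \setminus N}) \in \Sigma_k$ sitting at level $\le c-\varepsilon < c_k$, which contradicts the definition of $c_k$.

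The main obstacle I expect is the equivariant deformation on the abstract $C^{1,1}$ manifold $M$: the pseudo-gradient has to be simultaneously odd, locally Lipschitz, and tangent to $M$, and the associated flow must exist globally without leaving $M$. This is precisely where the hypotheses that $M$ is $C^{1,1}$, complete and symmetric are used --- the $C^{1,1}$ regularity makes the tangent projection Lipschitz, completeness prevents finite-time blow-up, and symmetry of $M$ together with evenness of $J$ allows the averaging step that renders the pseudo-gradient odd. Once this equivariant flow is in hand, the remainder of the argument is purely topological via the subadditivity and monotonicity of $\gamma$.
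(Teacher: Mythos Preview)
The paper does not prove this statement at all: Theorem~\ref{teo3.4} is simply quoted from Struwe \cite[Theorem II.5.7]{struwe} and invoked as a black box in the proof of Theorem~\ref{teo2.1}. There is therefore no ``paper's own proof'' to compare against.

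That said, your outline is essentially the classical Ljusternik--Schnirelmann argument that Struwe gives: define the minimax levels $c_k$ over compact symmetric sets of genus $\ge k$, use an equivariant deformation lemma (built from an odd pseudo-gradient on the $C^{1,1}$ symmetric manifold $M$) together with $(PS)$ to show each $c_k$ is critical, and appeal to the multiplicity statement $\gamma(K_c)\ge j$ in case of collapsing levels. Your identification of the main technical point --- constructing an odd, tangent, locally Lipschitz pseudo-gradient whose flow stays in the complete manifold $M$ --- is exactly right, and this is handled in Struwe via \cite[Lemma II.3.9 and Remark II.3.10]{struwe}, which the present paper also cites later. So your sketch is correct and aligned with the standard proof; it is just that the authors here chose to import the result rather than reprove it.
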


\subsection{ Proof of Theorem \ref{teo2.1}}
Our goal is to show that the $ \Pd $ problem has infinite solutions. For this, we are considering the functional $ I $ defined in the Nehari manifold $\M \subset \HA$.
By Lemmas \ref{lema2.2}-\ref{lema2.4} and by Proposition \ref{prop3.3}, $\M $ is a symmetrical and closed $C^2$ manifold, $I(u) > 0$ for all $u  \in \M$ and $I$ satisfies the $(PS)_c$ condition in $\M $ for all $c \in \R$. With this, we are in the hypothesis of the theorem \ref{teo3.4}. It remains then to show that $\hat{\gamma} (\M )= \infty$.
We will do this by proving that for all $ n \geq 1 $ there is a symmetric and compact set $K_n \subset \M$ such that $\gamma (K_n ) \geq n$. Hence, the first statement of this theorem follows from the Lemma \ref{lema2.5} and the Theorem \ref{teo3.4}.

Let $n \geq 1$ and let $X_n$ a subspace generated by $ n$ functions $v_j \in C_0^{\infty} (\R^N   )$ linearly independent and such that $\mbox{supp } v_j \subset \{x \in \R^N   : b(x) > 0\}$  and let
$$S^{ n-1} := X_n \cap \{u \in \HA : ||u||_A  = 1\}.$$
By the definition of $ S^{ n-1}$, we get $ B(u) > 0$ for all $u \in  S^{ n-1}$ and by Lemma \ref{lema2.2} the equation  $\alpha'_u  (t) = 0$ has exactly one solution $t_u \in (0, \infty)$. Thus, the application $\phi : S^{ n-1} \rightarrow  \M $ given by $\phi(u) := t_u u$ is well defined. Moreover, by the way we find $t_u$ in Lemma \ref{lema2.2} we can see that $t_u=t_{-u}$, then
\begin{equation*}
\phi(-u)=t_{-u}(-u)=-t_uu=-\phi(u)
\end{equation*} 
giving us $\phi$ is an odd application.

Affirmation: $\phi:u \mapsto  t_u $ is a continuous function.

To show the statement, note that if the necessary and sufficient condition of existence of a $ t_u $ given in Lemma \ref{lema2.2} is satisfied, then $T''_u  (t) < 0$ for $t = t_u$, as shown in the graph \ref{fig4}. Thus, by calling $f(t,u)=T'_u(t), $ we will have $f(t_u,u)=T'_u(t_u)=0, $ with $\frac{\partial f}{\partial t}=T''_u(t)<0$. Hence, by the implied function theorem we obtain the continuity of $ u \mapsto t_u $, concluding the statement.
Thus, $ \phi $ is a continuous and odd function of $ K_n $ into $S^{n- 1} $ and follows from the property of the genus that $\gamma (K_n ) = \gamma ( S^{ n- 1}  ) = n$, as \cite[Section II.5]{struwe}. 




\section{Preliminaries of Theorem \ref{teo2.2}}
We want to establish existence results and multiplicity of solutions for the case where the hypothesis $ (D_3) $ and one of the conditions $ (D_4) $ or $ (D_5) $ are satisfied.
We try to adapt to our case a method that was developed by \cite{16} and also used in \cite{jasz}.
Next, we present the necessary considerations to construct the proof of the Theorem \ref{teo2.2}.
Our next result shows the existence of nontrivial solutions to the problem \Pd  when $ a $ and $ b $ are periodic.

\begin{prop}\label{Prop4.2}
	Supose $(D_3)$ and also $(D_4)$ or $(D_5 )$ be satisfied. Then, there is $v \in \M$ such that $I'(v) = 0$ and $|v(x)| > 0$ for all $x \in \R^N$.
\end{prop}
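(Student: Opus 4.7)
The plan is to produce $v$ as a ground state on the Nehari manifold, i.e.\ as a minimizer of $c := \inf_{u \in \M} I(u)$, and then deduce $|v|>0$ from a maximum principle. I will use throughout that under $(D_3)$ the whole problem, including the magnetic potential $A$, is $\Z^N$-invariant, so that $I$ is invariant under the $\Z^N$-action $u \mapsto u(\cdot + y)$, $y \in \Z^N$.

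\emph{Step 1 (a bounded minimizing (PS) sequence).} Since $\M$ is a complete $C^2$ symmetric manifold bounded away from $0$ by Lemma \ref{lema2.4}, I would apply Ekeland's variational principle on $\M$ to produce $\{u_n\} \subset \M$ with $I(u_n) \to c$ and $\|I'|_{\M}(u_n)\| \to 0$. By Lemma \ref{lema2.5} the same sequence is a $(PS)_c$ sequence for $I$ on $\HA$, and by Lemma \ref{lema4.1} it is bounded in $\HA$. Note also that the lower bound $\|u_n\|_A \geq \delta > 0$ from Lemma \ref{lema2.4} excludes the trivial scenario.

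\emph{Step 2 (a non-trivial weak limit by translation).} Extract $u_n \rightharpoonup u$ in $\HA$. Testing $I'(u_n)$ against $\varphi \in C_0^\infty(\R^N,\C)$ and passing to the limit using the diamagnetic inequality together with Rellich--Kondrachov (so that $|u_n| \to |u|$ in $L^p_{\mathrm{loc}}$ and $L^q_{\mathrm{loc}}$) and the boundedness of $a,b$ under $(D_3)$, one obtains $I'(u) = 0$. If $u \neq 0$, set $v = u$ and proceed to Step 3. If $u \equiv 0$, I would invoke the non-vanishing alternative: if $\sup_{y \in \R^N} \int_{B(y,1)} |u_n|^2 \to 0$, then by a Lions-type lemma applied to $|u_n|$ (legitimate by the diamagnetic inequality), $\||u_n|\|_r \to 0$ for every $r \in (2,2^*)$, so the Nehari identity $\|u_n\|_A^2 = A(u_n) + B(u_n)$ together with $a,b \in L^\infty$ forces $\|u_n\|_A \to 0$, contradicting $\|u_n\|_A \geq \delta$. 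Hence there exist $\alpha>0$ and $y_n \in \R^N$ with $\int_{B(y_n,1)}|u_n|^2 \geq \alpha$; rounding to the nearest lattice point (and enlarging the radius) I may take $y_n \in \Z^N$. Then $\tilde u_n(x) := u_n(x+y_n)$ is still a bounded $(PS)_c$ sequence by $\Z^N$-invariance of $I$ under $(D_3)$, and $\int_{B(0,2)}|\tilde u_n|^2 \geq \alpha$ prevents its weak limit $v$ from being zero. The argument of the previous paragraph then gives $I'(v)=0$, and $v \in \M$.

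\emph{Step 3 (strict positivity of $|v|$).} I would apply the magnetic Kato inequality
\[
-\Delta |v| \leq \mathrm{Re}\!\left(\mathrm{sgn}(\bar v)\,(-\Delta_A v)\right),
\]
valid in the distributional sense. Substituting from the equation $-\Delta_A v + v = a|v|^{q-2}v + b|v|^{p-2}v$ yields
\[
-\Delta |v| + |v| \leq a(x)|v|^{q-1} + b(x)|v|^{p-1} \quad \text{in } \R^N.
\]
A Brezis--Kato/Moser iteration gives $v \in L^\infty(\R^N)$, so the right-hand side can be written as $W(x)|v|$ with $W \in L^\infty_{\mathrm{loc}}$. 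The strong maximum principle applied to $-\Delta |v| + (1 - W)|v| \leq 0$ then forces $|v|>0$ on all of $\R^N$, since $v \not\equiv 0$.

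\emph{Main obstacle.} The hard part is Step 2: ruling out the vanishing case and extracting a non-trivial weak limit after a lattice translation, while preserving both the $(PS)$ property and the value of $I$. This is the precise point at which the periodicity hypothesis $(D_3)$ (applied, implicitly, to the full data $(A,a,b)$) is indispensable, and where the loss of compactness caused by the unbounded domain and the non-compact group of translations is finally overcome.
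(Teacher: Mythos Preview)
Your approach for Steps~1--2 is essentially the paper's: Ekeland on $\M$ to get a bounded $(PS)_{c_0}$ sequence, Lions' lemma on $|u_n|$ (via the diamagnetic inequality) to rule out vanishing, and a $\Z^N$-translation using $(D_3)$ to recover a nontrivial weak limit $v$ with $I'(v)=0$. You are also right to flag that the translation argument needs $\Z^N$-invariance of the full functional, hence implicitly of the magnetic potential $A$; the paper uses this without stating it.

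There are two differences worth noting. First, the paper goes further and shows $I(v)=c_0$: using $I(v_n)-\tfrac{1}{p}\langle I'(v_n),v_n\rangle$ (resp.\ with $\tfrac{1}{q}$ under $(D_5)$), Fatou and weak lower semicontinuity give $c_0\ge I(v)\ge c_0$. This is not in the proposition's statement, but it is the natural ground-state conclusion. Second, and conversely, your Step~3 supplies something the paper's proof actually omits: the paper only shows $v\not\equiv 0$, not $|v(x)|>0$ for every $x$. Your route via Kato's inequality $-\Delta|v|\le \mathrm{Re}\bigl(\mathrm{sgn}(\bar v)(-\Delta_A v)\bigr)$, a Brezis--Kato iteration to get $v\in L^\infty$, and then the strong maximum principle on $-\Delta|v|+(1-W)|v|\le 0$, is the standard and correct way to upgrade $v\not\equiv 0$ to $|v|>0$ in the magnetic setting.
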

\begin{proof}
	By Lemma \ref{lema2.2}, $I$ is bounded below in $ \M $. As a consequence of the Variational Principle of Ekeland \cite[Corolary A.3]{costa}, there exists a sequence $\{u_n  \} \subset \M$ such that
	$$I'(u_n  ) \rightarrow  0\;\;\; \mbox{ e }\;\;\; I(u_n  ) \rightarrow  c_0 := \inf_{u\in \M } I(u). $$
	By Lemma \ref{lema4.1}, we have $\{u_n  \}$ bounded. Thus, passing to a subsequence if necessary, will exist $u \in \HA$ such that $u_n \rightharpoonup u$. 
	By the principle of concentration and compactness, according to the Lemma of P.L. Lions \cite[Lemma 1.21]{willen}, if for some $ r> 0 $ we have
	$$\lim_{n\rightarrow \infty} \sup_{ y\in \R^N} \int_{B(y,r)} |u_n|^2 dx = 0,$$
	then $|u_n|  \rightarrow  0$ in $L^p (\R^N   )$ and $L^q (\R^N)$. 
In this case we would have 
	\begin{equation*} 
	A(u_n)=\int_{\R^N}a   (x)|u_n|^p dx\rightarrow  0
	\end{equation*}
	and
	\begin{equation*} 
	B(u_n)=\int_{\R^N}b   (x)|u_n|^q dx \rightarrow  0
	\end{equation*}
	and how $||u_n||^2_A =  A(u_n  ) + B(u_n  )$, it follows that $|u_n| \rightarrow  0$ in $\HA$. 
	
	However, by Lemma \ref{lema2.4} we have $||u||_A\geq \delta >0$ for all  $u\in \M$, which leads us to a contradiction.
	Thus, there exists ${y_n } \subset \Z^N $, $\rho  > 0$ and $R \geq r$ such that, passing to a subsequence if necessary we have  $v_n (x) := u_n  (x -  y_n )$ satisfying
	\begin{equation}\label{4.1}
	\lim_{n\rightarrow \infty} |v_n|^2 dx = \int_{B(y_n ,R)} |u_n|^2 dx  \geq \rho  > 0.
	\end{equation}
	Being $\{v_n\}$ bounded in $\HA$ there exists $v \in \HA $ such that
	$v_n  \rightharpoonup  v$ in $\HA $. In addition, by the diamagnetic inequality we have
	\begin{equation*} 
	\int|\nabla|v_n||^2\leq \int |\nabla_A v_n|^2<C\;\;
	\end{equation*}
	for all $n\in \R$, whence $\{u_n\}$ is bounded in $H_0^1.$ Moving on to a subsequence if necessary, by the Theorem of Rellich-Kondrachov \cite[Theorem 8.16]{kavian} we get  	
	\begin{eqnarray} 
	\label{4.2}    |v_n| &\rightharpoonup& |v| \;\; \mbox{ in } H^1(\R^N), \\
	\label{4.3}   |v_n| &\rightarrow &  |v| \;\; \mbox{in}\;\; L^l_{loc} (\R^N   ), 2 \leq  l < 2^*  ,\\
	\nonumber   |v_n| &\rightarrow & |v| \;\; \mbox{a.e. in}\;\; \R^N.
	\end{eqnarray}
	By (\ref{4.3}),
	$$ ||v||^2_2 \geq \int_{ B(0,R)} |v|^2 = \lim_{ n\rightarrow \infty} \int_{ B(0,R)} |v_n|^2 \geq \rho  > 0.$$
	Then $|v| \neq  0$. As $y_n \in \Z^N$, follows from the periodicity of $a$ that
	\begin{eqnarray*}
		A(v_n)&=&A(u_n(x-y_n))=\int a(x-y_n)|u_n(x-y_n)|^p\\
		&=&\int a(x) |u_n(x-y_n)|^p=\int a(x)|u_n(x)|^p=A(u_n).
	\end{eqnarray*}
	In the same way, by the $b$, $B(v_n)=B(u_n)$. Thereby, $||I'(v_n )||_A = ||I' (u_n  )||_A \rightarrow  0$. 
	We can show that $I'(v) = 0$. In fact, just take $\phi \in C_c^{\infty}(\R^N)$, using (\ref{4.1})-(\ref{4.3}) we get
	\begin{eqnarray}\label{Ifi}
	\langle I'(v_n), \phi \rangle \rightarrow 0.
	\end{eqnarray}
	On the other hand, 
	\begin{eqnarray*}
		\langle I'(v_n), \phi \rangle &=& \int \nabla_A u_n \nabla_A \phi -\int a(x)|v_n|^{p-1}\phi-\int b(x)|v_n|^{q-1}\phi  \rightarrow \langle I'(v), \phi \rangle,
	\end{eqnarray*}
	which together with (\ref{Ifi}) gives us that $\langle I'(v), \phi \rangle = 0$ for all $\phi \in C_c^{\infty}(\R^N). $ By the density of $C_c^{\infty}(\R^N)$ in $\HA$ we obtain $\langle I'(v), w \rangle = 0$ for all $ w \in \HA$, and we conclude that $I'(v)=0$.
	Now we will show that $ v $ is a minimum for $ I $ in $ \M. $ As $I(v_n ) = I(u_n  )$, $I(v_n ) \rightarrow  c_0$. 
	If $ (D_4) $ is satisfied, then $ b \geq 0 $ and by Fatou's Lemma
	\begin{eqnarray*}
		\liminf_{ n\rightarrow \infty} \left[ B(v_n )\right] \geq B( \liminf_{ n\rightarrow \infty}v_n  )=B(v).
	\end{eqnarray*}
	With this, we have
	\begin{eqnarray*}
		c_0&=& \liminf_{n\rightarrow \infty} I(v_n ) = \liminf_{n\rightarrow \infty }\left( I(v_n ) - \frac{1}{p}    \langle I'(v_n), v_n\rangle\right)\\
		& =& \liminf_{ n\rightarrow \infty} \left[ \left(  \frac{1}{2} -   \frac{1}{p} \right) ||v_n||_A^2 + \left(  \frac{1}{p} -  \frac{1}{q} \right) B(v_n )\right] \\
		&\geq &      \left(  \frac{1}{2} -   \frac{1}{p} \right) ||v ||_A^2 + \left(  \frac{1}{p} -  \frac{1}{q} \right) B(v  )\\
		&=&   I(v  ) - \frac{1}{p}    \langle I'(v ), v \rangle = I(v) \geq c_0.
	\end{eqnarray*}	
	Then, $I(v) = c_0$. 
Similarly, if $ (D_5) $ is satisfied, then $ a \leq 0 $ and hence
	\begin{eqnarray*}
		c_0&=& \liminf_{n\rightarrow \infty} I(v_n ) =  \liminf_{n\rightarrow \infty }\left( I(v_n ) - \frac{1}{p}    \langle I'(v_n), v_n\rangle\right)\\
		&=& \liminf_{ n\rightarrow \infty} \left[ \left(  \frac{1}{2} -   \frac{1}{p} \right) ||v_n||_A^2 + \left(  \frac{1}{p} -  \frac{1}{q}\right) A(v_n )\right] \\
		&\geq &      \left(  \frac{1}{2} -   \frac{1}{p} \right) ||v ||_A^2 + \left(  \frac{1}{p} -  \frac{1}{q} \right) A(v  )\\
		&=& I(v  ) - \frac{1}{p}    \langle I'(v ), v \rangle = I(v) \geq c_0,
	\end{eqnarray*}
thus, $ I (v) = c_0 $ also in this case.
	
\end{proof}

In the case where the hypothesis ($ D_3 $) is satisfied, we can not show the complete continuity of $ A'^+ $ and $ B'^+. $ Because of this it is not possible to guarantee the condition $ (PS)_c $ for the functional $ I$ in the range, for no $ c \in \R $. In order to overcome this problem, we need a type deformation argument. For this we will make use of the following notations
$$K := \{u \in \M  : I'(u) = 0\}$$
$$K_d := \{u \in K  : I(u) =  d\}. $$
In addition, we defined the following level sets of the Nehari manifold
\begin{eqnarray*} 
	I^d :=\{u \in \M  : I(u) \leq  d\}, & \;\;\;  I_e := \{u \in \M  : e \leq  I(u)\},&\;\;\;   I_e^d := I_e \cap I^d.
\end{eqnarray*}
Let $ \mathcal{K} $ be a subset of $ K $ such that $ \mathcal{K} = -\mathcal{K} $ and each orbit $ O (u) \subset K $ has a single $\mathcal{K}$.

Our goal now is to show that $ \mathcal {K} $ has infinite elements under the hypothesis $ (D_3) $ and also $ (D_4) $ or $ (D_5) $. For this we suppose that $ \mathcal {K} $ is finite to arrive at a contradiction. For the same argument used in \cite[Lemma 2.13]{16} we can show the next result.
\begin{lemma}\label{lema4.3}
The smallest of the distances between two distinct elements of the set $ K \cup \{0 \} $ is a positive number.
\end{lemma}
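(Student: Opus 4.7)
The plan is to argue by contradiction: suppose the infimum of $\|u-v\|_A$ over pairs of distinct elements of $K \cup \{0\}$ were zero, so we could extract sequences $(u_n), (v_n) \subset K \cup \{0\}$ with $u_n \neq v_n$ and $\|u_n - v_n\|_A \to 0$. The first and easy disposal is the case when one of the terms is the zero element, say $v_n = 0$ along a subsequence: then $\|u_n\|_A \to 0$, but $u_n \in K \subset \M$ forces $\|u_n\|_A \geq \delta > 0$ by Lemma \ref{lema2.4}, a contradiction. So we can assume both sequences lie entirely in $K$.

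The second, substantive step uses the structural assumption that $\mathcal{K}$ is finite (this is the standing hypothesis for deriving the contradiction that eventually gives infinitely many orbits). Since every orbit $O(u) \subset K$ has exactly one representative in $\mathcal{K}$, we can write $u_n = w_n(\,\cdot - y_n)$ and $v_n = \tilde{w}_n(\,\cdot - z_n)$ with $w_n, \tilde{w}_n \in \mathcal{K}$ and $y_n, z_n \in \Z^N$. Finiteness of $\mathcal{K}$ lets us pass to a subsequence on which $w_n \equiv w$ and $\tilde{w}_n \equiv \tilde{w}$ are constant. Because $(D_3)$ forces $a, b$ (and, in this periodic setting, the magnetic potential $A$) to be invariant under integer translations, $\|\cdot\|_A$ is invariant under $\Z^N$-shifts, so translating by $-y_n$ yields
\[
\|w - \tilde{w}(\,\cdot - k_n)\|_A \;=\; \|u_n - v_n\|_A \;\longrightarrow\; 0, \qquad k_n := z_n - y_n \in \Z^N.
\]

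The endgame is a dichotomy on the sequence $k_n$. If $(k_n)$ is bounded, then since $k_n \in \Z^N$ only finitely many values occur; on a subsequence $k_n \equiv k$ is constant, giving $w = \tilde{w}(\,\cdot - k)$, hence $w$ and $\tilde{w}$ share an orbit. The uniqueness clause defining $\mathcal{K}$ then forces $w = \tilde{w}$ and $k = 0$, making $u_n = v_n$ for all large $n$, which contradicts $u_n \neq v_n$. If instead $|k_n| \to \infty$, a standard weak-convergence argument (tested against $C_c^\infty$ functions whose supports eventually miss the translates) gives $\tilde{w}(\,\cdot - k_n) \rightharpoonup 0$ in $\HA$; combined with the strong convergence $w - \tilde{w}(\,\cdot - k_n) \to 0$ we obtain $w = 0$, contradicting $w \in K \subset \M$ and $\|w\|_A \geq \delta > 0$.

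The main technical obstacle I expect is justifying translation-invariance of the magnetic norm $\|\cdot\|_A$ under $\Z^N$-shifts, since $\nabla_A = \nabla - iA$ is a priori not translation-invariant; this really does require the implicit periodicity of $A$ accompanying $(D_3)$, and one must check that $(\nabla_A u)(\,\cdot - k) = \nabla_A (u(\,\cdot - k))$ for $k \in \Z^N$. Once that invariance is in hand, together with the analogous identities $A(u(\,\cdot - k)) = A(u)$ and $B(u(\,\cdot - k)) = B(u)$ already exploited in Proposition \ref{Prop4.2}, the dichotomy above proceeds in close parallel to \cite[Lemma 2.13]{16}, as the statement itself acknowledges.
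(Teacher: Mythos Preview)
Your argument is correct and follows essentially the same route as the paper: reduce via the finiteness of $\mathcal{K}$ to fixed representatives $w,\tilde w\in\mathcal{K}$ and a single integer shift sequence, then run the bounded/unbounded dichotomy on that shift. The paper frames it as a direct computation of $\kappa$ on a minimizing sequence rather than by contradiction, and relegates the ``one element is $0$'' case to a closing remark, but the substance is identical; your explicit flag about needing $\Z^N$-periodicity of $A$ for translation invariance of $\|\cdot\|_A$ is well taken and is indeed an implicit standing assumption in the paper's periodic setting.
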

\begin{proof}
The demonstration of this lemma follows the idea of what was done in \cite[Lemma 2.13] {16}. 

We want to show that
	$\kappa := \inf\{||v -  w|| : v, w \in K  \cup \{0\}, v \neq  w\} > 0.$
	Take $v_n$ and $w_n$ in $\mathcal{K}$ and $k_n$, $l_n$ in $\Z^N$ such that $v_n(\cdot - k_n) \neq w_n(\cdot -l_n)$ for all $n$ and
	$$ ||v_n(\cdot - k_n) - w_n(\cdot -l_n)|| \rightarrow \kappa,\;\; \mbox{  as    } n\rightarrow \infty.$$
	
	Take $m_n=k_n-l_n.$ By the finiteness of $ \mathcal {K} $, passing a subsequence we have $ v_n = v \in \mathcal {K} $, $ w_n = w \in \mathcal {K} $. In addition we have two possibilities, or $ m_n = m \in \Z^N $ for almost everything $n$, or $|m_n| \rightarrow \infty.$
	If $m_n=m \in \Z^N$ for almost all $ n$, then
	\begin{eqnarray*}
		0&< &||v_n(\cdot - k_n) - w_n(\cdot -l_n)||=||v(\cdot - k_n) - w(\cdot -l_n)|| \\
		&=&||v-w(\cdot -m_n)||= ||v-w(\cdot -m)|| 	=\kappa ;\; \mbox{  for all   } n \in \N.
	\end{eqnarray*}
On the other hand, if $|m_n| \rightarrow \infty$, then $w(\cdot - m_n) \rightharpoonup 0$ and we have $\kappa = \lim_{n\rightarrow \infty} ||v-w(\cdot -m_n)||\geq ||v|| = 1. $
	As we wanted to demonstrate.
\end{proof}

In \cite{16} the minimum is assumed to be all $ v, w \in K $, but since $ 0 $ is an isolated critical point, $ \kappa $ remains positive even if $ v $ or $ w $ is $ 0 $ .

Next, we will establish a property that is related to the notion of Palais-Smale discrete attractor introduced in \cite{3}, also used in \cite[Lemma 4.4]{jasz} and \cite[Lemma 2.14]{16}.

\begin{lemma}\label{lema4.4}
	Assume $ (D_3) $ and also $ (D_4) $ or $ (D_5) $ are satisfied and that $\{u_n\}$,  $\{v_n \} \subset \M$ are two $(PS)_c$ sequences of $I$. Then, or $||u_n  -  v_n ||_A \rightarrow  0$ as $n \rightarrow \infty$ or $\limsup_{ n\rightarrow \infty} ||u_n  -  v_n ||_A \geq \kappa > 0$.
\end{lemma}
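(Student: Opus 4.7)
The plan is to exploit P.L. Lions' concentration--compactness lemma applied to $w_n := u_n - v_n$ so as to produce a dichotomy: either $w_n$ vanishes, in which case I would derive $\|w_n\|_A \to 0$ directly, or $w_n$ concentrates somewhere, in which case appropriate $\Z^N$-translates converge weakly to two distinct critical points whose separation is controlled by Lemma \ref{lema4.3}. By Lemma \ref{lema4.1} both $\{u_n\}$ and $\{v_n\}$ are bounded in $\HA$, hence so is $\{w_n\}$. Aiming at the stated alternative, I assume $\|w_n\|_A \not\to 0$, pass to a subsequence with $\|w_n\|_A \geq \varepsilon > 0$, and target the conclusion $\limsup_n \|w_n\|_A \geq \kappa$.

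Vanishing would mean $\sup_{y\in\R^N}\int_{B(y,r)}|w_n|^2 \to 0$ for some $r>0$, and Lions' lemma would then give $w_n \to 0$ in every $L^s(\R^N)$ with $2<s<2^*$. Testing $\langle I'(u_n)-I'(v_n), w_n\rangle = o(1)$, using the pointwise bound $\bigl||t|^{s-2}t-|\sigma|^{s-2}\sigma\bigr| \leq C(|t|^{s-2}+|\sigma|^{s-2})|t-\sigma|$ for $s=p,q$, together with the $L^\infty$ bound on $a,b$ from $(D_3)$ and Hölder, the two nonlinear terms vanish in the limit. This forces $\|w_n\|_A \to 0$, contradicting $\|w_n\|_A \geq \varepsilon$. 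Hence vanishing is impossible and there exist $k_n \in \Z^N$ and $\rho > 0$ with $\int_{B(k_n,r)}|w_n|^2 \geq \rho$.

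I now translate: set $\tilde u_n := u_n(\cdot - k_n)$, $\tilde v_n := v_n(\cdot - k_n)$, so that $\tilde w_n := \tilde u_n - \tilde v_n$ localises $\rho$ of its $L^2$ mass near the origin. By the periodicity hypothesis $(D_3)$ (exploited exactly as in Proposition \ref{Prop4.2}), $\tilde u_n$ and $\tilde v_n$ remain $(PS)_c$ sequences for $I$ with identical $\HA$-norms. Passing to a further subsequence, $\tilde u_n \rightharpoonup u_\infty$ and $\tilde v_n \rightharpoonup v_\infty$ weakly in $\HA$. Local compactness (Rellich--Kondrachov combined with the diamagnetic inequality) yields $\int_{B(0,r)}|u_\infty - v_\infty|^2 \geq \rho > 0$, so $u_\infty \neq v_\infty$; testing $\langle I'(\tilde u_n), \phi\rangle \to 0$ against $\phi \in C_c^\infty(\R^N,\C)$ and passing to the weak limit, as in Proposition \ref{Prop4.2}, shows $I'(u_\infty) = I'(v_\infty) = 0$. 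Thus $u_\infty, v_\infty \in K \cup \{0\}$ are distinct.

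By Lemma \ref{lema4.3}, together with the remark following it (which allows $0$ as one of the two vectors), $\|u_\infty - v_\infty\|_A \geq \kappa$. Since $\|w_n\|_A = \|\tilde w_n\|_A$ and the $\HA$-norm is weakly lower semicontinuous,
\[
\limsup_{n\to\infty}\|w_n\|_A \;\geq\; \liminf_{n\to\infty}\|\tilde w_n\|_A \;\geq\; \|u_\infty - v_\infty\|_A \;\geq\; \kappa,
\]
which is the required second alternative. The main technical obstacle in executing this plan rigorously is verifying that the $\Z^N$-translations act as isometries of $\HA$ and preserve the $(PS)_c$ condition in the magnetic setting; this is implicit in the paper's treatment of the periodic case (notably in Proposition \ref{Prop4.2}) but would need to be invoked carefully, either via appropriate periodicity of $A$ or via a covariant shift, before Lions' dichotomy can be applied verbatim.
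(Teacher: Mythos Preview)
Your proposal is correct and follows essentially the same approach as the paper: the paper also splits into the vanishing case (framed there as $\|u_n-v_n\|_p,\|u_n-v_n\|_q\to 0$, which forces $\|u_n-v_n\|_A\to 0$ via $\langle I'(u_n)-I'(v_n),u_n-v_n\rangle$) and the concentration case, where $\Z^N$-translates are taken, weak limits $u^1,v^1\in K\cup\{0\}$ with $u^1\neq v^1$ are obtained, and weak lower semicontinuity together with Lemma~\ref{lema4.3} gives the $\kappa$ lower bound. Your closing caveat about translation invariance of $\HA$ in the magnetic setting is apt; the paper invokes this invariance without further comment.
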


\begin{proof}
	It follows from Lemma \ref{lema4.1} that {$ u_n $} and {$ v_n$} are bounded in $\HA$. 
	
	\textbf{Case 1:} Suppose first that  $||u_n  -  v_n ||_p $, $||u_n  -  v_n ||_q \rightarrow  0$ as $n \rightarrow  \infty$. By H\"{o}lder inequality we have
	\begin{eqnarray*} 
		|| u_n  -  v_n||_A^2 &=& \langle I'(u_n  ), (u_n  -  v_n )\rangle -  \langle I'(v_n  ), (u_n  -  v_n )\rangle \\
		&+&  \int_{\R^N}a(x)[|u_n  |^{p- 2} u_n  -  |v_n |^{p- 2} v_n ](u_n  - v_n ) dx \\
		&+&  \int_{\R^N}b(x)[|u_n  |^{q- 2} u_n  -  |v_n |^{q- 2} v_n ](u_n  - v_n ) dx \\
		&\leq &  \langle I'(u_n  ), (u_n  -  v_n )\rangle -  \langle I'(v_n  ), (u_n  -  v_n )\rangle \\
		&+&  ||a||_{\infty}(||u_n  ||_p^{p- 1}   -  ||v_n||_p^{p- 1} )||u_n  - v_n||_p \\
		&+&  ||b||_{\infty}(||u_n  ||_q^{q- 1}   -  ||v_n||_q^{q- 1} )||u_n  - v_n||_q.
	\end{eqnarray*}
	First, as $I'(u_n  )\rightarrow 0$ and $I'(v_n  )\rightarrow 0$ and also {$u_n  $} and {$v_n $} are bounded in $\HA$, it follows that $\{u_n-v_n \}$ is also bounded in $\HA$, thereby 
	$$\langle I'(u_n  ), (u_n  -  v_n )\rangle \rightarrow 0 \;\;\mbox{ and }\;\;    \langle I'(v_n  ), (u_n  -  v_n )\rangle\rightarrow 0.$$
	In addition, by limitation of $\{u_n\}$ and $\{v_n \} $ in $L^p (\R^N )$ and $L^q (\R^N )$, we conclude that $||u_n  -  v_n ||_A\rightarrow 0$. 
	
	\textbf{Case 2:} Let us now assume that $||u_n  -  v_n ||_p \nrightarrow 0$ or $||u_n  -  v_n ||_q \nrightarrow 0$ as $n \rightarrow \infty$.
	As {$u_n  $} and {$v_n $} are bounded in $\HA$, by diamagnetic inequality {$|u_n|  $} and {$|v_n| $} are bounded in $H^1.$ Then, by P.L. Lions Lemma \cite[Lemma 1.21]{willen}, there exists $\delta_0 > 0, \{y_n \} \subset \Z^N$ and $r > 0$ such that, passing to a subsequence if necessary, $u_n  (x -  y_n ) -   v_n (x -  y_n )$ satisfies
	\begin{equation}\label{4.4}
	\lim_{ n\rightarrow \infty } \int_{ B(0,r)}|u_n  (x -  y_n ) -  v_n (x -  y_n )|^2 dx \geq \delta_0  > 0.
	\end{equation}
	Note that $I$ and $\M $ are invariant by translating $u \mapsto  u(\cdot -  k)$, $k \in \Z^N$, thereby defining
	$$u_n^1 (x) := u_n  (x -  y_n ) \mbox{ and } v_n^1 (x) := v_n (x -  y_n ),$$
	we have that $u_n^1 $, $v_n^1 \in \M$ and $\{u_n^1 \}$, $\{v_n^1 \}$ are $(PS)_c$ sequences (with the same $c$). 
	
With this, we are again in the hypotheses of the lemma \ref{lema4.1}, where we obtain that $\{u_n^1\} $, $\{v_n^1\}$ are bounded. Then, there exists $u^1 $ and $v^1 \in \HA $ such that,
	$$u_n^1 \rightarrow u^1   \mbox{ and } v_n^1\rightarrow v^1  ,$$
	as $n\rightarrow \infty$. Turning to a subsequence if necessary, (\ref{4.2}) and (\ref{4.3}) are also valid for $ { u_n^1 }$ and ${ v_n^1 }$. By (\ref{4.4}) and the strong convergence of $u_n$ and $v_n$ in $L^2_{loc} (\R^N   )$, we have $u^1 -  v^1 \neq  0$. 
	As was previously seen $I' (u^1 ) = I' (v^1 ) = 0$. Thus, $u^1$, $v^1 \in K  \cup\{0\}$ and hence	
	\begin{equation*}
	\limsup_{n\rightarrow \infty} ||v_n -  u_n||_A \geq \liminf_{n\rightarrow \infty } ||v_n -  u_n||_A \geq ||v^1 -  u^1||_A \geq \kappa,
	\end{equation*}
	which completes the proof.
\end{proof}

Recalling our notation, we are denoting the inner product in $ \HA $ for $ \langle \cdot, \cdot \rangle $. Define the $ I $ gradient by duality, that is, by the set
$$\langle\nabla_A I(v), w \rangle := \langle I'(v), w\rangle \mbox{ for all } w \in \HA.$$
Since $ \M $ is a $ C^2 $ manifold, closed in $ \HA $, by the result \cite[Lemma II.3.9]{struwe}, we have  $I|_{\M}$ admits a vector field of pseudo gradients $ H $, that is, a locally Lipschitzian and continuous application $H : \M  \setminus T \rightarrow  T \M $ such that
\begin{equation}\label{H1} 
||H(v)||_A < 2||\nabla_A I(v)||_A,
\end{equation}
\begin{equation}\label{H2} 
(H(v), \nabla_A I(v)) > \frac{1}{2} ||\nabla_A I(v)||_A^2
\end{equation}
worth for all $ v \in \M \setminus K $. Moreover, since $ I $ is even, we assume that $ H $ is odd, as can be seen in \cite[Remark II.3.10]{struwe}.
Note that $\langle\nabla_A I(v), v\rangle = 0$ if $v \in  \M$ and $\nabla_A I$ is equal to the gradient of $ I|_{\M}$ for each $ v$.  

Now, let $\eta : \mathcal{D} \rightarrow  \M$  be the flow corresponding to the field of pseudo-gradient vectors $H$, that is, $\eta$ is defined by
$$\label{P}
\left\{ \begin{array} [c]{ll}
\frac{d}{dt}  \eta(t, v) = - H(\eta(t, v)),&\\
\eta(0, v) = v ,&\\
\end{array}          \right.\leqno {( P) }
$$
Here $\mathcal{D} := \{(t, v) : v \in \M  \setminus K, t \in I_v \}$ and $I_v := (T^-  (v), T^+ (v))$ it is the maximum interval of existence for the initial value problem $(P)$.
\begin{remark}
For a result in \cite[Theorem A.4]{Rab} $ \eta $ is odd in $ v $. 
	
\end{remark}
\begin{remark}
	As $I \in C^2  (\M   )$, we can actually choose $ H $ as the gradient vector field of $ I|_{\M}$, that is, we can put  $H(v) := \nabla_A I(v),$ with $ v \in \M$. For this $ H $ we can show that the flow $ \eta $ exists for all $(t, v) \in \R \times \M$.
\end{remark}

\begin{lemma}\label{lema4.6}
	For all $v \in \M$ the limit $\lim_{ t\rightarrow  T^+ (v)} \eta(t,  v)$ exists and is a critical point of $I$.
\end{lemma}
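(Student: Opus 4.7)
The plan is to exploit the monotone decay of $I$ along the flow together with the discreteness Lemma \ref{lema4.4} to force convergence, and then continuity of $\nabla_A I$ to identify the limit as a critical point.

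First, property (\ref{H2}) of the pseudo-gradient field gives
$$\tfrac{d}{dt} I(\eta(t,v)) = -\langle\nabla_A I(\eta(t,v)), H(\eta(t,v))\rangle \leq -\tfrac{1}{2}\|\nabla_A I(\eta(t,v))\|_A^2,$$
so $t \mapsto I(\eta(t,v))$ is non-increasing. Since Corollary \ref{cor2.3} bounds $I$ below on $\M$, the limit $d := \lim_{t \to T^+(v)} I(\eta(t,v))$ exists in $\R$, and integration produces the key estimate $\int_0^{T^+(v)} \|\nabla_A I(\eta(\tau,v))\|_A^2\,d\tau \leq 2(I(v)-d) < \infty$. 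In particular I can select $\tau_n \to T^+(v)$ with $\|\nabla_A I(\eta(\tau_n,v))\|_A \to 0$, so that $\{\eta(\tau_n,v)\}$ is a bounded $(PS)_d$-sequence for $I$ on $\M$ by Lemmas \ref{lema2.5} and \ref{lema4.1}.

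To prove that $\eta(t,v)$ converges as $t \to T^+(v)$, I argue by contradiction. If no limit exists, the boundedness of the trajectory yields $\alpha \in (0,\kappa/3)$ (with $\kappa$ the separation constant from Lemma \ref{lema4.3}) and times $s_n < t_n \to T^+(v)$ with $\|\eta(s_n,v) - \eta(t_n,v)\|_A \geq \alpha$. From (\ref{H1}) and Cauchy--Schwarz,
$$\alpha \leq \int_{s_n}^{t_n} \|H(\eta(\tau,v))\|_A\,d\tau \leq 2(t_n-s_n)^{1/2}\left(\int_{s_n}^{t_n}\|\nabla_A I(\eta(\tau,v))\|_A^2\,d\tau\right)^{1/2}.$$
Since the last integral tends to $0$, necessarily $t_n - s_n \to \infty$. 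On each unit interval $[s_n, s_n+1]$ and $[t_n-1, t_n]$ the finite-dissipation bound forces the integral of $\|\nabla_A I(\eta(\cdot,v))\|_A^2$ to vanish, so by pigeonhole there are $\sigma_n \in [s_n, s_n+1]$ and $\sigma_n' \in [t_n-1, t_n]$ with $\|\nabla_A I(\eta(\sigma_n,v))\|_A \to 0$ and $\|\nabla_A I(\eta(\sigma_n',v))\|_A \to 0$; reapplying the same Cauchy--Schwarz estimate on these short subintervals gives $\|\eta(\sigma_n,v) - \eta(s_n,v)\|_A \to 0$ and $\|\eta(\sigma_n',v) - \eta(t_n,v)\|_A \to 0$. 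Thus $\{\eta(\sigma_n,v)\}$ and $\{\eta(\sigma_n',v)\}$ are two $(PS)_d$-sequences with $\alpha/2 \leq \liminf \|\eta(\sigma_n,v) - \eta(\sigma_n',v)\|_A < \kappa$, contradicting Lemma \ref{lema4.4}.

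Hence $w := \lim_{t\to T^+(v)}\eta(t,v)$ exists. Since $\|\nabla_A I(\eta(\tau_n,v))\|_A \to 0$ and $\eta(\tau_n,v) \to w$, continuity of $\nabla_A I$ (which holds because $I \in C^2$) yields $\nabla_A I(w) = 0$, while the uniform lower bound $\|u\|_A \geq \delta$ on $\M$ from Lemma \ref{lema2.3} rules out $w=0$, so $w \in K$. The main obstacle is the oscillation-to-contradiction step: extracting two genuinely low-gradient PS sequences close to $\eta(s_n,v)$ and $\eta(t_n,v)$ respectively requires combining the integrated dissipation bound with the arc-length estimate in (\ref{H1}) on short subintervals before Lemma \ref{lema4.4} can be invoked.
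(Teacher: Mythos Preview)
Your overall strategy matches the paper's: use the monotone decay and integrated dissipation bound along the pseudo-gradient flow, and in the non-convergent case produce two Palais--Smale sequences whose mutual distance is trapped strictly between $0$ and $\kappa$, contradicting Lemma~\ref{lema4.4}. However, two steps are not justified as written.

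First, the inference ``select $\tau_n \to T^+(v)$ with $\|\nabla_A I(\eta(\tau_n,v))\|_A \to 0$'' from $\int_0^{T^+(v)}\|\nabla_A I(\eta)\|_A^2 < \infty$ is valid only when $T^+(v)=\infty$; a finite integral over a finite interval says nothing about vanishing of the integrand. The paper treats $T^+(v)<\infty$ separately: the same Cauchy--Schwarz estimate you wrote forces convergence directly, and criticality of the limit then follows from the ODE continuation principle (if $w\in\M\setminus K$, the locally Lipschitz field $H$ would extend the flow past $T^+(v)$), not from low-gradient times.

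Second --- and this is the real gap --- your contradiction with Lemma~\ref{lema4.4} is incomplete because you only impose $\|\eta(s_n,v)-\eta(t_n,v)\|_A\geq\alpha$ with no upper bound. Nothing then prevents $\|\eta(\sigma_n,v)-\eta(\sigma_n',v)\|_A$ from having $\limsup\geq\kappa$, which is precisely the alternative Lemma~\ref{lema4.4} allows; your asserted inequality ``$\liminf<\kappa$'' is unsupported (and would not suffice anyway, since the lemma concerns $\limsup$). The paper, following \cite{16}, fixes this by using the intermediate value theorem to choose times with $\|\eta(t_n,v)-\eta(t_{n+1},v)\|_A=\epsilon$ exactly for some $\epsilon\in(0,\kappa/2)$, and then picks $t_n^1,t_n^2$ as the \emph{first} and \emph{last} times in $(t_n,t_{n+1})$ at distance $\epsilon/3$ from the endpoints; the low-gradient points $s_n^1\in[t_n,t_n^1]$, $s_n^2\in[t_n^2,t_{n+1}]$ then automatically satisfy $\epsilon/3\leq\|\eta(s_n^1,v)-\eta(s_n^2,v)\|_A\leq 2\epsilon<\kappa$. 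You need this two-sided control for the argument to close.
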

\begin{proof} The proof follows similarly to what was done in \cite[Lemma 2.15]{16}, with $ \rho (d) $ replaced by $ \kappa $. Note that the argument used in \cite{16} only uses the existence of the pseudo-gradient flow $ \eta $ in a complete manifold and also the fact that the $ (PS)_c $ sequence is discrete. This last property is valid in the context of Lemma \ref{lema4.4}. 
	
	Let $v \in \M$ and let $ I(v) =D$. We will split the demonstration into two cases.
	
	\textbf{Case 1: } $ T^+(v)<+\infty $. For $0\leq s<t<T^+(v), $ by (\ref{H1}), (\ref{H2}) and $(2)$ we have that
	\begin{eqnarray*}
		||\eta(t,v )-\eta(s,v )   ||_A &\leq& \int_{s}^{t}	||H(\eta(\tau,v ))||_A d\tau\leq 2 \sqrt{2}\int_{s}^{t}\sqrt{ \langle  H(\eta(\tau,v )), \nabla_A I(\eta(\tau,v ) ) \rangle } d\tau\\
		&\leq & 2 \sqrt{2(t-s)}\left(\int_{s}^{t} \langle  H(\eta(\tau,v )), \nabla_A I(\eta(\tau,v ) ) \rangle  d\tau\right)^{1/2}\\
		&=& 2 \sqrt{2(t-s)}[I(\eta(s,v ))-I (\eta(t,v ) ) ]^{1/2}\leq 2 \sqrt{2(t-s)}[I(v)-c]^{1/2}.
	\end{eqnarray*}
	We then have the limit  $\lim_{ t\rightarrow  T^+ (v)} \eta(t,  v)$ exists, since $ T^+(v)<+\infty $. Also, the limit is a critical point of $I,$ otherwise we would have that the trajectory $t\mapsto \eta(t,  v)$ could continue beyond $ T^+(v)<+\infty $.
	
	\textbf{Case 2: } $ T^+(v)=+\infty $. We need to show that for all $\epsilon>0$ exists $t_{\epsilon}>0$ with $||\eta(t_{\epsilon}, v)- \eta(t, v)||<\epsilon$ for $t>t_{\epsilon}$. Supposing it is absurd that this is false. Thus we will have $ \epsilon $ between $ 0 $ and $ \kappa $, where $ \kappa $ is as in Lemma \ref{lema4.3}, and a sequence $(t_n)\subset \lbrack 0, \infty )$ with $t_n \rightarrow \infty$ and $||\eta(t_{n}, v)- \eta(t_{n+1}, v)||=\epsilon$.
	Choose the smallest $t_n^1\in (t_n,t_{n+1})$ such that $||\eta(t_{n}, v)- \eta(t_n^1, v)||_A=\frac{\epsilon}{3}$ and let $\delta_n:= \min_{s\in [t_n,t_n^1]}||\nabla_A I(\eta(s,v))||ds.$ Thus,
	\begin{eqnarray*}
		\frac{\epsilon}{3}&=& ||\eta(t_{n}, v)- \eta(t_n^1, v)||_A\leq \int_{t_n}^{t_n^1}||H(\eta(s,v))||ds\leq 2\int_{t_n}^{t_n^1}||\nabla_A I(\eta(s,v))||ds\\
		&\leq& \frac{2}{\delta_n} \int_{t_n}^{t_n^1}||\nabla_A I(\eta(s,v))||ds \leq \frac{4}{\delta_n} \int_{t_n}^{t_n^1} \langle H(\eta(s,v)),  \nabla_A I(\eta(s,v)) \rangle ds\\
		&=& \frac{4}{\delta_n} (I(\eta(t_n,v))- I(\eta(t_n^1,v)) ).
	\end{eqnarray*}
	But the latter term goes to zero as $ n \rightarrow \infty. $ Implies that $\delta_n \rightarrow 0$ and also, exists $s_n^1 \in [t_n,t_n^1] $ such that $\nabla_AI(\eta(s_n^1,v))\rightarrow 0 .$
In the same way, we seek the greatest $t_n^2\in (t_n^1,t_{n+1})$ for which $||\eta(t_{n+1}, v)- \eta(t_n^2, v)||_A=\frac{\epsilon}{3}$, and then $\nabla_AI(\eta(s_n^2,v))\rightarrow 0 .$
	Calling $v_n^1:=\eta(s_n^1,v)$ and $v_n^2:=\eta(s_n^2,v)$ we have $||v_n^1- \eta(t_n, v)||_A \leq \frac{\epsilon}{3}$ and $||v_n^2- \eta(t_{n+1}, v)||_A \leq \frac{\epsilon}{3}$, there is, $\{v_n^1\}$ and $\{v_n^2\}$ are two PS sequences such that
	$$\frac{\epsilon}{3} \leq ||v_n^1-  v_n^2  ||_A \leq 2\epsilon <\kappa,   $$
	which contradicts the Lemma \ref{lema4.4}.
	Thus we can show that for all $\epsilon>0$ existe $t_{\epsilon}>0$ with $||\eta(t_{\epsilon}, v)- \eta(t, v)||_A<\epsilon$ for $t>t_{\epsilon}$, therefore, the limit exists and is a critical point of $ I $.
\end{proof}

Let $O \subset \M$ and $\delta > 0$. Define
$$U_{\delta} (O) := \{w \in  \M  : dist(w,O ) < \delta \}.$$

\begin{lemma}\label{lema4.7}
	Let $d \geq c_0 = \inf_{ u\in \M}  I(u)$. Then, for all $\delta > 0$ exists $\epsilon  = \epsilon (\delta) > 0$ such that
	\begin{description}
		\item[$ (a)$  ]   $I_{d-  \epsilon }^{d+\epsilon} \cap K = K_d$;
		\item[  $(b)$ ]   $\lim_{ t\rightarrow T^+ (v)} I(\eta(t, v)) < d -  \epsilon$ for $ v \in I^{ d+\epsilon}  \setminus U_{\delta} (K_d )$.
	\end{description}

\end{lemma}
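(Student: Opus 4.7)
The overall strategy exploits the standing assumption of this section --- that $\mathcal{K}$ is finite, which is what we set out to contradict --- and routes part (b) through Lemma~\ref{lema4.4}. For part (a), note that $I$ is invariant under the $\mathbb{Z}^N$-translations $u\mapsto u(\cdot-k)$, so every element of $K$ lies in the orbit of some element of $\mathcal{K}$ and takes the same value of $I$; hence $I(K)=I(\mathcal{K})$ is a finite subset of $\R$. Choosing $\epsilon<\tfrac{1}{2}\min\{|I(u)-d|:u\in\mathcal{K},\ I(u)\neq d\}$ (or arbitrary if that set is empty) then yields $I_{d-\epsilon}^{d+\epsilon}\cap K=K_d$ immediately.

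For part (b), I will first shrink $\delta$ so that $\delta<\kappa$, where $\kappa$ is from Lemma~\ref{lema4.3}; this is legitimate because the conclusion for smaller $\delta$ is stronger. Arguing by contradiction, suppose there are $\epsilon_n\downarrow 0$ and $v_n\in I^{d+\epsilon_n}\setminus U_\delta(K_d)$ with $\lim_{t\to T^+(v_n)}I(\eta(t,v_n))\geq d-\epsilon_n$. Lemma~\ref{lema4.6} provides $u_n:=\lim_{t\to T^+(v_n)}\eta(t,v_n)\in K$; monotonicity of $I$ along the flow gives $d-\epsilon_n\leq I(u_n)\leq I(v_n)\leq d+\epsilon_n$, so $I(u_n)\to d$ and by part~(a) $u_n\in K_d$ for all large $n$. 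In particular $\|v_n-u_n\|_A\geq\delta$. Integrating (\ref{H2}) along the flow,
\begin{equation*}
I(v_n)-I(u_n)\geq\frac{1}{2}\int_0^{T^+(v_n)}\|\nabla_A I(\eta(\tau,v_n))\|_A^2\,d\tau,
\end{equation*}
and since the left-hand side tends to $0$, so does the integral on the right.

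Next I will let $t_n^1$ be the first time at which $\|\eta(t,v_n)-u_n\|_A=\tfrac{2\delta}{3}$ and $t_n^2>t_n^1$ the first subsequent time at which this distance equals $\tfrac{\delta}{3}$; both exist by continuity and the endpoint values $\|v_n-u_n\|_A\geq\delta$, $\eta(t,v_n)\to u_n$. The triangle inequality gives $\|\eta(t_n^1,v_n)-\eta(t_n^2,v_n)\|_A\geq\tfrac{\delta}{3}$, and combining (\ref{H1}) with Cauchy--Schwarz,
\begin{equation*}
\tfrac{\delta}{3}\leq\int_{t_n^1}^{t_n^2}\|H(\eta(\tau,v_n))\|_A\,d\tau\leq 2(t_n^2-t_n^1)^{1/2}\left(\int_{t_n^1}^{t_n^2}\|\nabla_A I(\eta(\tau,v_n))\|_A^2\,d\tau\right)^{1/2},
\end{equation*}
which forces $t_n^2-t_n^1\to\infty$. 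A mean-value choice of $s_n\in[t_n^1,t_n^2]$ then produces $w_n:=\eta(s_n,v_n)$ with $\|\nabla_A I(w_n)\|_A\to 0$ and $I(w_n)\in[d-\epsilon_n,d+\epsilon_n]$, so $\{w_n\}\subset\M$ is a $(PS)_d$ sequence sitting in the annular shell $\tfrac{\delta}{3}\leq\|w_n-u_n\|_A\leq\tfrac{2\delta}{3}<\kappa$. Applying Lemma~\ref{lema4.4} to $\{w_n\}$ and to the (trivially $(PS)_d$) sequence $\{u_n\}\subset K_d$ then yields the desired contradiction, since the norm difference neither tends to $0$ nor has $\limsup\geq\kappa$.

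The main obstacle in this plan is the extraction of the PS sequence $\{w_n\}$: the integrability $\int\|\nabla_A I\|_A^2\,d\tau\to 0$ by itself supplies no usable evaluation point. One must use the length-from-distance estimate and Cauchy--Schwarz to force the sub-interval $[t_n^1,t_n^2]$ to grow, and then a mean-value argument inside that interval to place the sample in a prescribed annulus around $K_d$. This interlocking of energy decay, length control, and mean-value selection is the crux; everything else reduces to routine applications of Lemmas~\ref{lema4.6} and~\ref{lema4.4}.
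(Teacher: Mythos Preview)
Your approach is exactly the one the paper intends: part (a) from the finiteness of $\mathcal K$, and part (b) via the Szulkin--Weth flow argument of \cite[Lemma~2.16]{16} built on Lemmas~\ref{lema4.4} and~\ref{lema4.6}; the paper itself supplies no further details beyond that reference. Your treatment of (a) and the overall scheme for (b) are correct.

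There is, however, one genuine slip in (b). With $t_n^1$ chosen as the \emph{first} time that $\|\eta(t,v_n)-u_n\|_A=\tfrac{2\delta}{3}$, nothing prevents this distance from rising above $\tfrac{2\delta}{3}$ again on $(t_n^1,t_n^2)$, so your assertion that $w_n$ lies in the shell $\tfrac{\delta}{3}\le\|w_n-u_n\|_A\le\tfrac{2\delta}{3}$ is not justified, and without the upper bound you cannot rule out $\limsup\|w_n-u_n\|_A\ge\kappa$ when invoking Lemma~\ref{lema4.4}. The fix is standard: first let $t_n^2$ be the first time the distance to $u_n$ equals $\tfrac{\delta}{3}$, and then take $t_n^1$ to be the \emph{last} time before $t_n^2$ at which the distance equals $\tfrac{2\delta}{3}$. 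On $[t_n^1,t_n^2]$ the distance then genuinely stays in $[\tfrac{\delta}{3},\tfrac{2\delta}{3}]$, your triangle-inequality and Cauchy--Schwarz estimates go through verbatim, the mean-value choice of $s_n$ places $w_n$ in the shell, and the contradiction with Lemma~\ref{lema4.4} follows.
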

\begin{proof} 	
	\begin{description}
		\item[ $(a)$  ]    It follows immediately from the finiteness of $\mathcal{K}$. 
		\item[$(b)$   ]    This part can be proved by the same argument used in \cite[Lemma 2.16]{16}, but with $ \kappa $ instead of $ \rho (d + 1) $. This argument is based on the lemmas \ref{lema4.4} and \ref{lema4.6} and involves a careful analysis of the flow.
	\end{description}

\end{proof}

\subsection{ Proof of Theorem \ref{teo2.2}}

The existence of solutions was shown in Proposition \ref{Prop4.2}. To show the fact that there are infinitely many geometrically distinct solutions, we use the same argument used in \cite[Theorem 1.2]{16} as we will show below.

Consider the sequence
$$c_k := \inf\{d \in \R : \gamma (I^d ) \geq k\}, \;\;\; k \in N,$$
that is, for every $ k \in \N $ we take the lowest level $ d $ such that the genus of $ I^d $ is greater than $ k $.
We want to use the finiteness of $ \mathcal{K} $ to arrive at a contradiction. For this, we will show that
\begin{equation*}
K_{c_k} \neq  \emptyset \mbox{ and } c_k < c_{k+1} \mbox{ for all } k.
\end{equation*}
$K_{c_k} \neq  \emptyset$ assures us that for each term of the sequence there is a $w\in \HA$ such that $I'(w)=0$, that is, such that $ w $ is the solution of \Pd  and $ I(w) = c_k $. Also, showing that $ c_k <c_{k + 1}$ ensures that every $ k $ we are working at a different level from the previous one.
Let is make $ d = c_k $. By the Lemma \ref{lema4.3}, $\gamma(K_d)=0$ or $1.$ Using the continuity property of the genus, there is a $\delta$ such that $0 < \delta < \frac{\kappa}{2}$ and with $\gamma (\bar{U }) = \gamma (K_d )$, where  
$$U := U_{\delta} (K_d )= \{w \in  \M  : dist(w,K_d ) < \delta \},$$
that is, there is a range around the $ d $ level such that the genus remains the same.
For this $ \delta $, we choose one $\epsilon   = \epsilon (\delta) > 0$ such that the conclusion of Lemma \ref{lema4.7} follows. Thus, for each $v \in I^{d+\epsilon}  \setminus U$ exists $t \in \lbrack 0, T^+ (v))$ such that $I(\eta(t, v)) < d- \epsilon$.

Now, define the application $e: I^{d+\epsilon} \setminus U \rightarrow \lbrack 0, \infty)$;
$$  e(v):= \inf \{  t \in \lbrack 0, T^+ (v)); I(\eta(t, v)) < d- \epsilon  \} $$
As $d -   \epsilon$ is not a critical value of $ I $, it follows by the Lemma \ref{lema4.7} that the application $ e $ is continuous and even more, it is even (since $ I $ is even).

Let $h: I^{d+\epsilon} \setminus U \rightarrow I^{d-\epsilon}$; $h(v) := \eta(e(v), v)$. See that 
$$h(-v)=\eta(e(-v), -v)=\eta(e(v), -v)=- \eta(e(v),v)=-h(v),$$
There is, $h$ is odd and continuous.
Now, using the properties of the genus and the definition of $ c_k $, we get
$$\gamma (I^{ d+\epsilon}  ) \leq  \gamma (\bar{U} ) + \gamma  (I^{ d- \epsilon } ) \leq  \gamma (K_d ) + k -  1.$$
By the definitions of $ d = c_k $ and $ c_ {k + 1} $ we have
$$\gamma(K_d)\geq 1,\;\; \mbox{  if  } \;\; c_{k+1}>c_k$$
and
$$\gamma(K_d)> 1,\;\; \mbox{  if  } \;\; c_{k+1}=c_k.$$
By Lemma \ref{lema4.3}, we have $\gamma (K_d ) \leq  1$. Then we get $\gamma (K_d ) = 1$ and $K_d \neq  \emptyset$ and also $c_k < c_{k+1}$ for all $k$. Thus, there are infinite pairs $(\pm v_k)$ of solutions geometrically distinct from \Pd  such that $ I (v_k) = c_k $ contradicting the finiteness of $ \mathcal{K}, $ which completes the proof.

\vspace{0.6cm}

\bibliographystyle{unsrt}

\end{document}